\DeclareMathAlphabet{\mathsfsl}{OT1}{cmss}{m}{sl}
\newtheorem{thm}{Theorem}[section]
\newtheorem{lem}[thm]{Lemma}
\newtheorem{prop}[thm]{Proposition}
\theoremstyle{definition}
\newtheorem{defn}[thm]{Definition}
\newtheorem{notn}[thm]{Notation}
\newtheorem{rem}[thm]{Remark}
\newtheorem{exam}[thm]{Example}
\begin{document}

\title{Generation of sutured manifolds}

\date{}
\author{{\Large Yi NI}\\{\normalsize Department of Mathematics, Caltech, MC 253-37}\\
{\normalsize 1200 E California Blvd, Pasadena, CA
91125}\\{\small\it Email\/:\quad\rm yini@caltech.edu}}

\maketitle

\begin{abstract}
Given a compact, oriented, connected surface $F$, we show that the set of connected sutured manifolds $(M,\gamma)$ with $R_{\pm}(\gamma)\cong F$ is generated by the product sutured manifold $(F,\partial F)\times[0,1]$ through surgery triads. This result has applications in Floer theories of $3$--manifolds. The special case when $F=D^2$ or $S^2$ has been a folklore theorem, which has already been used by experts before. 
\end{abstract}

\section{Introduction}

Surgery triad is an important concept in various Floer theories of $3$--manifolds. A surgery triad is often associated with exact triangles of Floer homologies, which have been extremely useful in many applications to low-dimensional topology. 

One strategy of proving certain isomorphism theorems in Floer theories of $3$--manifolds is to use surgery exact triangles and functoriality to reduce the proof of the general case to checking the isomorphism for manifolds in a generating set. See \cite{Lin} for an exposition.\footnote{The author first learned this strategy from Peter Ozsv\'ath, who had learned it from Tom Mrowka. The first application of this strategy was contained in \cite{BMO}.} In order to apply this strategy, we often need certain finite generation results of $3$--manifolds. To explain in detail, we need a few definitions.

\begin{defn}
A {\it surgery triad} is a triple $(Y,Y_0,Y_1)$ of $3$--manifolds, such that there is a framed knot $K\subset Y$ such that $Y_i$ is the $i$--surgery on $K$, $i=0,1$. 
\end{defn}

\begin{defn}
A set $\mathcal S$ of $3$--manifolds is {\it closed under surgery triads}, if whenever two manifolds in a surgery triad belong to $\mathcal S$ the third manifold in the triad must also belong to $\mathcal S$.
\end{defn}

\begin{defn}
A set $\mathcal S$ of $3$--manifolds is {\it generated by a subset $\mathcal G$} through surgery triads, if $\mathcal S$ is the minimal set containing $\mathcal G$ which is closed under surgery triads. In this case, we also say the elements in $\mathcal S$ are {\it generated by $\mathcal G$} through surgery triads.
\end{defn}

In order to apply the aforementioned strategy to study closed, oriented, connected $3$--manifolds, we need the following
 folklore theorem.

\begin{thm}\label{thm:ClosedGeneration}
The set of closed, oriented, connected $3$--manifolds is generated by $\{S^3\}$ through surgery triads.
\end{thm}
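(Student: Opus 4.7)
The plan is to invoke the Lickorish--Wallace theorem: every closed, oriented, connected 3-manifold $Y$ is obtained from $S^3$ by integer Dehn surgery on some framed link $L = K_1 \cup \cdots \cup K_n \subset S^3$. I will prove $Y \in \mathcal{S}$ by induction on the number of components $n$. The base case $n = 0$ gives $Y = S^3$, which lies in the generating set by hypothesis. For the inductive step, set $W := S^3_{(r_1, \ldots, r_{n-1})}(K_1 \cup \cdots \cup K_{n-1})$; this has a surgery presentation with $n-1$ components, so $W \in \mathcal{S}$ by the inductive hypothesis. Letting $K \subset W$ denote the image of $K_n$, the target manifold is $Y = W_{r_n}(K)$.

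The central tool is that for any integer $k$, the three slopes $\infty$, $k$, $k+1$ on $\partial \nu(K)$ pairwise have geometric intersection number one, so they assemble into a surgery triad
\[
\bigl(W,\; W_k(K),\; W_{k+1}(K)\bigr).
\]
Since $W \in \mathcal{S}$ and $\mathcal{S}$ is closed under surgery triads, this gives the equivalence $W_k(K) \in \mathcal{S} \Longleftrightarrow W_{k+1}(K) \in \mathcal{S}$. Iterating over all consecutive pairs of integers, the set $\{k \in \mathbb{Z} : W_k(K) \in \mathcal{S}\}$ is either all of $\mathbb{Z}$ or empty. Hence it suffices to produce a single integer $k_0$ with $W_{k_0}(K) \in \mathcal{S}$.

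To produce such a seed I would run a secondary argument based on Kirby calculus. By performing handle slides of $K_n$ over the earlier components $K_i$, or by normalizing at the outset via Lickorish's stronger form of the theorem (that one may take $L$ to consist of unknots with framings $\pm 1$), I aim to arrange that for some integer $k_0$ the manifold $W_{k_0}(K)$ admits an alternative surgery presentation in $S^3$ with strictly fewer than $n$ components, at which point the outer inductive hypothesis delivers the seed. The hardest part of the plan is exactly this seed step: while the triad argument cleanly propagates membership among integer framings once any one of them is known, it does not by itself descend the induction on component count. Verifying that some integer surgery on $K$ always admits a simpler Kirby diagram requires careful use of blowdowns, handle slides, and Rolfsen twists — especially in the zero-framing boundary case, where the natural lexicographic induction on $(n, \sum_i |r_i|)$ fails to decrease and additional input is needed to break the symmetry.
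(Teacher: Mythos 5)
Your proposal takes a genuinely different route from the paper, and it has a real gap at exactly the point you flag as ``the hardest part'': the seed step is not a technicality to be cleaned up but the central obstruction, and in the form you state it (``some integer surgery on $K$ always admits a simpler Kirby diagram'') it is false. Already for $n=1$, if $K_1$ is a nontrivial knot then by Gordon--Luecke no integer surgery on $K_1$ returns $S^3$, so no integer filling of $K_1$ admits a presentation with fewer components, and your ladder of triads $(W, W_k(K), W_{k+1}(K))$ has nothing to start from. Normalizing via Lickorish so that every component is an unknot with framing $\pm1$ does not close the induction either: blowing down the $(\pm1)$-framed unknot $K_n$ produces an $(n-1)$-component diagram for $Y$ itself (so if the inductive hypothesis covered arbitrary $(n-1)$-component integer presentations you would be done with no triads at all --- but then you must prove the inductive step for arbitrary links, which is the $n=1$ failure above), while if the inductive hypothesis covers only normalized presentations, the blown-down diagram is no longer normalized and the hypothesis does not apply. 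Every way of closing this loop that I can see requires a second, independent complexity that decreases, which is precisely what your plan is missing.

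The paper avoids the seed problem entirely by choosing a different induction. It works with (generalized, sutured) Heegaard splittings and a complexity $HC$ built from the pair $\big(c(\Sigma),\iota(\Sigma)\big)$, where $\iota(\Sigma)$ is the minimal intersection number of a pair of non-separating attaching curves $(\alpha,\beta)$. The knot $K$ used in the surgery triad is chosen \emph{on the Heegaard surface} with the surface framing, arranged (Lemmas~\ref{lem:StandardTriad}--\ref{lem:AltSign}) so that the $0$--surgery splits $\Sigma$ into two strictly simpler surfaces (Lemma~\ref{lem:FramedSurg}) and the $(\pm1)$--surgery acts by a Dehn twist that strictly decreases $|\alpha\cap\beta|$ or allows a handle cancellation (Lemma~\ref{lem:pm1Surg}). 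Thus \emph{both} of the other two manifolds in the triad are strictly simpler than $M$, and the closure property gives the inductive step directly --- no seed is needed. If you want to salvage a Lickorish--Wallace--style argument, you would have to import some analogue of this second decreasing quantity; as written, your induction on the number of link components does not descend.
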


Baldwin and Bloom \cite{BB} generalized the above theorem to get a finite generation result for bordered $3$--manifolds with one boundary component. See \cite{Culler} for further work on this topic.

The goal of this note is to prove a generation result for sutured manifolds. We first recall the definition of sutured manifolds.

\begin{defn}
A {\it sutured manifold} $(M,\gamma)$ is a compact oriented
3--manifold $M$ together with a (possibly empty) set $\gamma\subset \partial M$ of
pairwise disjoint annuli. 

Every component of $R(\gamma)=\partial M-\mathrm{int}(\gamma)$ is
oriented. Define $R_+(\gamma)$ (or $R_-(\gamma)$) to be the union
of those components of $R(\gamma)$ whose normal vectors point out
of (or into) $M$. The orientations on $R(\gamma)$ must be coherent
with respect to the core of $\gamma$, hence every component of $\gamma$
lies between a component of $R_+(\gamma)$ and a component of
$R_-(\gamma)$.
\end{defn}

\begin{rem}
In Gabai's original definition of sutured manifolds \cite{G1}, $\gamma$ may also have torus components. We will not consider this case in the current paper.
\end{rem}

\begin{exam}
Let $F$ be a compact oriented surface, 
\[P=F\times [0,1], \delta=(\partial F)\times [0,1],
R_-(\delta)=F\times\{0\}, R_+(\delta)=F\times\{ 1\},\] then $(P,\delta)$ is a sutured manifold. In this case we say that $(P,\delta)$ is a {\it product sutured manifold}.
\end{exam}

Let $F$ be a compact oriented connected surface with $p$ boundary components, where $p\ge0$. Let $\mathcal X=\mathcal X(F)$ be the set of connected sutured manifolds $(M,\gamma)$ with $R_{\pm}(\gamma)\cong F$. Let $(P,\delta)=(F,\partial F)\times[0,1]$ be the product sutured manifold in $\mathcal X$.

The main theorem in our paper is as follows.

\begin{thm}\label{thm:SuturedGeneration}
The set $\mathcal X$ is generated by $\{(P,\delta)\}$.
\end{thm}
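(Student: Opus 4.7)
The plan is to mimic the strategy underlying Theorem~\ref{thm:ClosedGeneration} inside the sutured category, exploiting the observation that Dehn surgery performed on a knot in the interior of a sutured manifold preserves both connectedness and the sutured boundary structure, so every intermediate manifold remains in $\mathcal{X}$.

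The first step is a relative Lickorish--Wallace statement: every $(M,\gamma) \in \mathcal{X}$ is obtained from $(P,\delta)$ by Dehn surgery on a framed link $L \subset \mathrm{int}(P)$, preserving the boundary identification. I would prove this by choosing any oriented $4$-dimensional cobordism $W$ from $P$ to $M$ relative to the boundary (which exists because the closed $3$-manifold $M \cup_{\partial} (-P)$ is null-cobordant) and simplifying its handle decomposition so that only $2$-handles remain attached to $P \times \{1\}$. Any $1$- or $3$-handle on $P \times \{1\}$ can be traded for a $2$-handle, since adding or removing an $S^1 \times S^2$ summand of the top boundary equals $0$-surgery on a local unknot. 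The resulting $2$-handles give a framed link in the top boundary of $P \times I$, which by general position may be placed inside $\mathrm{int}(P)$. Because every surgery occurs in the interior, each intermediate $3$-manifold has boundary $\partial P$ with sutures $\delta$ and is connected, hence lies in $\mathcal{X}$.

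With $M$ realized as surgery on $L = K_1 \cup \cdots \cup K_n \subset \mathrm{int}(P)$, I would induct on $n$. The base $n=0$ gives $M = P$. For the inductive step, let $P'$ be the result of surgery on $K_1 \cup \cdots \cup K_{n-1}$; the induction hypothesis gives $P' \in \langle (P,\delta)\rangle$, so the task reduces to the single-knot statement: whenever $Y \in \langle (P,\delta)\rangle$ and $K \subset \mathrm{int}(Y)$ is a framed knot, every integer surgery $Y_m(K)$ lies in $\langle (P,\delta)\rangle$. The key point is that for each integer $k$, the triple $(Y, Y_k(K), Y_{k+1}(K))$ is a surgery triad in the sense of the paper (reparametrize the framing of $K$ by $k$ meridians), so once a single $Y_{k_0}(K)$ is shown to belong to $\langle (P,\delta)\rangle$, the entire family $\{Y_m(K)\}_{m \in \mathbb{Z}}$ follows by induction on $|m - k_0|$. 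The bootstrap for a starting $Y_{k_0}(K)$ follows the same pattern as in Theorem~\ref{thm:ClosedGeneration}: using auxiliary unknots (crossing-change meridians or local unknots) one reduces to the case where $K$ is a local unknot $U$ in $Y$, and for such $U$ all integer surgeries $Y_n(U) = Y \# L(n,1)$ lie in $\langle (P,\delta)\rangle$ via the basic triad $(Y, Y \# (S^1 \times S^2), Y)$ and its framing shifts.

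I expect the main obstacle to be Step 1. Step 2 is a direct translation of the closed-manifold argument, with \emph{closed $3$-manifold} replaced by \emph{element of $\mathcal{X}$} and all surgeries constrained to the interior; the sutured structure contributes nothing beyond guaranteeing that intermediate manifolds stay in $\mathcal{X}$. Step 1 requires care to ensure the surgery link lies honestly inside $P$ (rather than in some closed auxiliary manifold), but this is handled by the handle-trading and general-position argument indicated above.
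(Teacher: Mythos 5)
There is a genuine gap, and it sits exactly where you predicted the argument would be easy rather than where you predicted it would be hard. Your Step 1 (a relative Lickorish--Wallace theorem: $M$ is surgery on a framed link in $\mathrm{int}(P)$) is a true and standard statement, and your observation that the triads $(Y,Y_k(K),Y_{k+1}(K))$ propagate membership in $\langle(P,\delta)\rangle$ from $Y$ and any single $Y_{k_0}(K)$ to the whole family $\{Y_m(K)\}_{m\in\mathbb Z}$ is correct. The problem is the bootstrap that is supposed to produce that first $Y_{k_0}(K)$. A surgery triad only yields a conclusion when \emph{two} of its three members are already known to be generated. When you introduce an auxiliary crossing-change circle $c$ to simplify $K$, the relevant triad is $\bigl(Y_m(K),\,Y_m(K'),\,Y_{m,0}(K\cup c)\bigr)$, where $K'$ is the simplified knot and the third term is surgery on the \emph{two-component} link $K\cup c$. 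To conclude that $Y_m(K)$ is generated you need both $Y_m(K')$ \emph{and} the two-component surgery to be generated; the latter has more link components than the manifold you are trying to handle, so your induction on the number of components of $L$ does not close up. There is no obvious complexity $(n,u)$ in which both of the other two vertices of the triad are strictly smaller, and this is precisely the well-known obstacle to the ``folklore'' proof of Theorem~\ref{thm:ClosedGeneration} --- which, as the paper points out, had no publicly available proof before this one. Writing ``follows the same pattern as Theorem~\ref{thm:ClosedGeneration}'' therefore assumes the very thing that needs to be proved.

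The paper takes a different route designed to avoid exactly this trap. Instead of a surgery presentation, it uses generalized sutured Heegaard splittings and a carefully ordered Heegaard complexity $HC$ (Definition~\ref{defn:HC}); the surgery curve $K$ is chosen on a Heegaard surface $\Sigma_i$ so that the $0$--surgery splits $\Sigma_i$ into two simpler surfaces (Lemma~\ref{lem:FramedSurg}) while the $\pm1$--surgery acts by a Dehn twist that reduces the intersection number of a minimizing pair of attaching curves (Lemma~\ref{lem:pm1Surg}). The combinatorial work in Lemmas~\ref{lem:NoIntersection}--\ref{lem:AltSign} is exactly the verification that such a $K$ exists, so that \emph{both} of the other two manifolds in the triad have strictly smaller complexity (Lemma~\ref{lem:StandardTriad}), which is what makes the induction in Proposition~\ref{prop:GenerateM} terminate. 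If you want to salvage your approach, you would need to replace ``number of link components plus unknotting moves'' by a complexity with this two-sided decrease property --- and at that point you are essentially reconstructing the paper's argument.
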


Theorem~\ref{thm:ClosedGeneration} follows from the special case of Theorem~\ref{thm:SuturedGeneration}
when $p=0$ or $1$ and $g=0$.
As far as we know, this is the first publicly available proof of Theorem~\ref{thm:ClosedGeneration}.

\begin{rem}
When $p>0$, the above theorem should presumably follow from the work of Baldwin and Bloom \cite{BB} on the finite generation of bordered manifolds, if one carefully examines the generators of bordered manifolds. 
\end{rem}


\begin{notn}
Suppose that $N$ is a submanifold of $M$. Let $M\bbslash N$ be the complement of an open tubular neighborhood of $N$ in $M$.
\end{notn}

\vspace{5pt}\noindent{\bf Acknowledgements.}\quad  The author was
partially supported by NSF grant number DMS-1811900. We are grateful to Fan Ye for the collaboration which inspired this work. We also thank John Baldwin for helpful comments on an earlier draft.


\section{Preliminaries}

In this section, we will collect some basic concepts and results about generalized Heegaard splittings, adapted to the category of sutured manifolds.

\begin{defn}
Let $(M,\gamma)\in \mathcal X$. A {\it generalized sutured Heegaard splitting (GSHS)} of $(M,\gamma)$ is a sequence of mutually disjoint surfaces 
\begin{equation}\label{eq:GHS}
\Xi=(\Sigma_1,F_1,\Sigma_2,F_2,\dots,\Sigma_{n-1},F_{n-1},\Sigma_n)
\end{equation}
properly embedded in $M$, with the following property: 
$(M,\gamma)$ is the union of $2n$ sutured manifolds 
\[(U_i,\mu_i),(V_i,\nu_i),\quad i=1,\dots,n,\] such that the interiors of these manifolds are mutually disjoint, and 
\[
R_-(\mu_i)=F_{i-1},\quad R_+(\mu_i)=\Sigma_i=R_-(\nu_i),\quad R_+(\nu_i)=F_i,
\]
where $F_0=R_-(\gamma)$, $F_n=R_+(\gamma)$. Moreover, $U_i$ is obtained by adding $1$--handles to $F_{i-1}\times[0,1]$ with feet on $\mathrm{int}(F_{i-1}\times\{1\})$, and $V_i$ is obtained by adding $2$--handles to $\Sigma_i\times[0,1]$ with attaching curves on $\mathrm{int}(\Sigma_i\times\{1\})$.
The manifolds $U_i,V_i$ are called {\it sutured compression bodies}.
\end{defn}

When $n=1$, the above definition is essentially the ``Heegaard splitting for sutured manifolds'' defined by Goda \cite{Goda}.

\begin{defn}
A GSHS $\Xi$ as in (\ref{eq:GHS}) is {\it connected} if every $F_i$ is connected.
\end{defn}

Every manifold in $ \mathcal X$ has a connected GSHS. In fact, we can choose $n=1$, in which case the GSHS is always connected.

\begin{defn}
Given a compact, oriented surface $S$, define
\[
c(S)=\sum_{S_j \text{ is a component of }S}\max\{1-\chi(S_j),0\}.
\]
\end{defn}

\begin{defn}
Let $(M,\gamma)\in \mathcal X$, with a GSHS as in (\ref{eq:GHS}). 
For each $\Sigma_i$, {\it a pair of non-separating attaching curves (a pair of NSACs)} in $\Sigma_i$ is a pair $(\alpha,\beta)$, where $\alpha\subset\Sigma_i$ is a non-separating simple closed curve bounding a disk in $U_i$,
and $\beta\subset\Sigma_i$ is a non-separating simple closed curve bounding a disk in $V_i$. 
Define 
\[
\iota(\Sigma_i)=\min\left\{|\alpha\cap\beta|\big|
\:(\alpha,\beta) \text{ is a pair of NSACs in }\Sigma_i
\right\}.
\]
When there does not exist any pair of NSACs, define $\iota(\Sigma_i)=\infty$.
The {\it Heegaard complexity} $hc(\Sigma_i)$ of $\Sigma_i$ is defined to be 
\[
(c(\Sigma_i),\iota(\Sigma_i)).
\]
\end{defn}

\begin{defn}\label{defn:HC}
Given a GSHS $\Xi$ as in (\ref{eq:GHS}), suppose that $\tau$ is a permutation of $1,2,\dots,n$, such that
\[
hc(\Sigma_{\tau(1)})\ge hc(\Sigma_{\tau(2)})\ge \cdots\ge hc(\Sigma_{\tau(n)}),
\]
where the order is the lexicographic order.
Let 
\[
HC(\Xi)=\big(
hc(\Sigma_{\tau(1)}), hc(\Sigma_{\tau(2)}), \dots, hc(\Sigma_{\tau(n)})\big),
\]
be the {\it Heegaard complexity} of $\Xi$. 

For $(M,\gamma)\in \mathcal X$, define its {\it Heegaard complexity}
\[
HC(M,\gamma)=\min\{HC(\Xi)| \Xi \text{ is a connected GSHS of }(M,\gamma)\},
\]
where the order is the lexicographic order.
\end{defn}

\begin{exam}\label{exam:MinHC}
The product sutured manifold $(P,\delta)$ has the trivial Heegaard splitting $\Omega$ with neither $1$--handles nor $2$--handles. We have
\[
HC(\Omega)=((c(F),\infty)).
\]
Suppose that $(M,\gamma)\in \mathcal X$, and $\Xi$ as in (\ref{eq:GHS}) is a GSHS of $(M,\gamma)$. Then we always have $c(\Sigma_1)\ge c(F)$, and the equality holds if and only if $U_1$ is a product sutured manifold, in which case we have $\iota(\Sigma_1)=\infty$. As a result, we always have $HC(\Xi)\ge HC(\Omega)$.

If $HC(\Xi)=HC(\Omega)$, then $n=1$ and $c(\Sigma_1)=c(F)$, which means that both $U_1$ and $V_1$ are product sutured manifolds. Hence $\Xi=\Omega$.

It follows that 
\[
HC(P,\delta)=HC(\Omega)\le HC(M,\gamma),
\]
and the equality holds if and only if $(M,\gamma)$ is the product sutured manifold.
\end{exam}

\begin{rem}
Generalized Heegaard splittings for compact oriented $3$--manifolds are defined by Scharlemann and Thompson \cite{ST_ThinPosition}.
Our Heegaard complexity of a GSHS is a refinement of the {\it width} defined in \cite{ST_ThinPosition} in the sutured category. Note that the generalized Heegaard splittings in Scharlemann and Thompson's definition of the width need not to be connected.
\end{rem}

\begin{lem}\label{lem:FramedSurg}
Suppose that $(M,\gamma)$ has a GSHS $\Xi$ as in (\ref{eq:GHS}), and $K\subset \Sigma_i$ is a knot with the surface framing. Let $\Sigma_i^-$ be a parallel copy of $\Sigma_i$ below $\Sigma_i$, and let $\Sigma_i^+$ be a parallel copy of $\Sigma_i$ above $\Sigma_i$. Let $K^{\pm}\subset\Sigma_i^{\pm}$ be the copies of $K$ in $\Sigma_i^{\pm}$.
Let $M'$ be the sutured manifold obtained by $0$--surgery on $K$, and let $F'\subset M'$ be the surface obtained from $\Sigma_i\bbslash K$ by capping off the two boundary components parallel to $K$ with disks.
Then $M'$ has a GSHS
\begin{equation}\label{eq:Xi'}
\Xi'=(\Sigma_1,F_1,\dots,F_{i-1},\Sigma_i^-,F',\Sigma_i^+,F_i,\dots, \Sigma_{n-1},F_{n-1},\Sigma_n).
\end{equation}
The sutured compression body between $F_{i-1}$ and $\Sigma_i^-$ is isotopic to $U_i$ in $M\bbslash K$, and the sutured compression body between $\Sigma_i^-$ and $F'$ is obtained by adding a 
$2$--handle to  $\Sigma_i^-\times[0,1]$, with attaching curve $K^-$. Similarly, the sutured compression body between $F'$ and $\Sigma_i^+$ is obtained by adding a 
$1$--handle to  $F'\times[0,1]$, with belt curve $K^+$, and
the sutured compression body between $\Sigma_i^+$ and $F_i$ is isotopic to $V_i$ in $M\bbslash K$.

Moreover, if $\Xi$ is connected and $K$ is non-separating in $\Sigma_i$, then $\Xi'$ is also connected.
\end{lem}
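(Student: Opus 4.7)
The plan is to analyze the $0$-surgery in a small neighborhood of $\Sigma_i$, leaving the rest of $M$ (and the compression bodies not containing $K$) essentially untouched. I choose a bicollar $\Sigma_i \times [-\epsilon, \epsilon] \subset M$ with $K = K \times \{0\}$, and a tubular neighborhood $N(K) \subset \Sigma_i \times [-\epsilon', \epsilon']$ for some $0 < \epsilon' < \epsilon$. Write $A = N(K) \cap \Sigma_i$ for the annular neighborhood of $K$ in $\Sigma_i$, put $N(K)^- = N(K) \cap (\Sigma_i \times [-\epsilon', 0])$ and $N(K)^+ = N(K) \cap (\Sigma_i \times [0, \epsilon'])$, and set $\Sigma_i^\pm = \Sigma_i \times \{\pm \epsilon\}$. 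After $0$-surgery, $M' = (M \setminus \mathrm{int}(N(K))) \cup V$, where $V$ is a solid torus glued so that its meridian coincides with the surface framing longitude of $K$. The two circles $\partial A \subset \partial N(K)$ become parallel meridians on $\partial V$; let $D_1, D_2 \subset V$ be the parallel meridian disks they bound, and set $F' = (\Sigma_i \bbslash K) \cup D_1 \cup D_2$. These disks cut $V$ into two $3$-balls $V^\pm$, with $V^\pm$ attached to the remainder of $M'$ along an annulus $A^\pm \subset \partial V$ corresponding under the surgery gluing to the $U_i$-side (resp.\ $V_i$-side) annulus of $\partial N(K)$.

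Next I verify the new compression body decomposition of $\Xi'$ piece by piece. The outer pieces (between $F_{j-1}$ and $\Sigma_j$ for $j \neq i$, plus those between $F_{i-1}$ and $\Sigma_i^-$ and between $\Sigma_i^+$ and $F_i$) all lie in $M \bbslash K$ and agree with the original $U_j, V_j$ up to trivial collars of $\Sigma_i$, which establishes the claimed isotopies. For the new piece $\tilde Z^- := (\Sigma_i \times [-\epsilon, 0] \setminus N(K)^-) \cup V^-$ between $\Sigma_i^-$ and $F'$, I would proceed as follows. Attach an abstract $2$-handle $h$ to $\Sigma_i \times [-\epsilon, 0]$ along $A \subset \Sigma_i \times \{0\}$ with surface framing. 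The union $N(K)^- \cup h$ is a $3$-ball: the core circle of $A$ is isotopic in $N(K)^-$ to its longitude $K$, and $h$ provides a compressing disk for $K$, killing $\pi_1$ of the solid torus; its boundary is the sphere $A^- \cup (\textrm{two free disks of } h)$. Since $V^-$ is also a $3$-ball with boundary $A^- \cup D_1 \cup D_2$, the Alexander trick produces a homeomorphism $N(K)^- \cup h \to V^-$ that is the identity on $A^-$ and sends the two disks of $h$ onto $D_1, D_2$. Extending by the identity on $(\Sigma_i \times [-\epsilon, 0]) \setminus N(K)^-$ yields a homeomorphism $\Sigma_i \times [-\epsilon, 0] \cup h \cong \tilde Z^-$, and reparameterizing $\Sigma_i \times [-\epsilon, 0]$ as $\Sigma_i^- \times [0, 1]$ exhibits $\tilde Z^-$ as the $V$-type sutured compression body $\Sigma_i^- \times [0, 1] \cup h'$, with $h'$ attached at $K^- \subset \Sigma_i^- \times \{1\}$ with surface framing. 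The region between $F'$ and $\Sigma_i^+$ is handled symmetrically: $V^+$ plays the role of a $1$-handle whose belt circle is isotopic through the upper bicollar to $K^+$.

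The main technical hurdle is the ball identification $N(K)^\pm \cup (\textrm{abstract handle}) \cong V^\pm$. Both sides are $3$-balls with the same boundary decomposition, so the Alexander trick suffices once the boundary identification on $A^\pm$ (forced by the surgery gluing) is fixed; framings match automatically because the cores of $A^\pm$ are parallel copies of $K$. Connectedness of $\Xi'$ is then immediate in the non-separating case: $\Sigma_i \bbslash K$ is connected with two new boundary circles, so $F'$ is connected after capping with $D_1, D_2$, and the other $F_j$ are unchanged.
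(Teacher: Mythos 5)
Your proof is correct and takes essentially the same approach as the paper: both arguments cut the new surgery solid torus along the two meridian disks capping off $F'$, and identify the two resulting balls as a $2$--handle attached to $\Sigma_i^-$ along $K^-$ and a dual $1$--handle on $F'$ with belt curve $K^+$, with the same one-line connectedness argument. The paper's proof is a three-sentence sketch of this; your Alexander-trick verification of the ball identifications merely fills in details the paper leaves implicit.
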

\begin{proof}
The two new disks in $F'$ split the new solid torus in $M'$ into two $D^2\times[0,1]$'s, which may be viewed as $1$--handles attached to two different sides of $F'$. The belt curves of these two $1$--handles are $K^{\pm}$. The $1$--handle attached to the bottom side of $F'$ can be viewed as a $2$--handle attached to the top side of $\Sigma_i^-$. So we get a GSHS as in the statement of this lemma.

If $\Xi$ is connected and $K$ is non-separating in $\Sigma_i$, then $F'$ is also connected, so $\Xi'$ is connected.
\end{proof}

The following lemma is well-known.

\begin{lem}\label{lem:pm1Surg}
Suppose that $(M,\gamma)$ has a GSHS as in (\ref{eq:GHS}), and $K\subset \Sigma_i$ is a knot with the surface framing.
We push $K$ slightly into $U_i$ and do $(+1)$--surgery (resp., $(-1)$--surgery) on it, the resulting manifold is denoted by $M''$. Then $M''$ has a GSHS
\begin{equation}\label{eq:Xi''}
\Xi''=(\Sigma_1,F_1,\dots,F_{i-1},\Sigma_i'',F_i,\dots, \Sigma_{n-1},F_{n-1},\Sigma_n),
\end{equation}
where $\Sigma_i''$ is isotopic to $\Sigma_i$ in $M\bbslash K$. The only difference between $\Xi$ and $\Xi''$ is, the curves on $\Sigma_i''$ which bounds disks in $U_i$ are exactly the images of the corresponding curves in $\Sigma_i$ under the left-handed (resp. right-handed) Dehn twist along $K$.
\end{lem}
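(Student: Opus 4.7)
The strategy is to localize the effect of the surgery to a bicollar of $\Sigma_i$ and invoke the classical Rolfsen/Lickorish twist identity. Choose a bicollar $\Sigma_i\times[-\epsilon,\epsilon]\subset M$ with $\Sigma_i=\Sigma_i\times\{0\}$, arranged so that $\Sigma_i\times[-\epsilon,0]\subset U_i$ and $\Sigma_i\times[0,\epsilon]\subset V_i$. Pushing $K$ slightly into $U_i$ amounts to placing $K'=K\times\{-\epsilon/2\}$, whose surface framing inside the collar coincides with the original surface framing on $\Sigma_i$. The surgery solid torus then lies entirely inside $U_i$ and is disjoint from $\Sigma_i\cup V_i$, so every splitting surface and every compression body in $\Xi$ other than $\Sigma_i$ and $U_i$ persists verbatim in $M''$.

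It remains to analyze what $U_i$ becomes after the surgery. The key ingredient is the classical fact that $(\pm 1)$-surgery, taken with surface framing, on $K\times\{1/2\}\subset\Sigma\times[0,1]$ produces a manifold homeomorphic to $\Sigma\times[0,1]$ via a homeomorphism that is the identity on $\Sigma\times\{0\}$ and restricts on $\Sigma\times\{1\}$ to a Dehn twist along $K$ of the handedness prescribed by the statement (left-handed for $+1$, right-handed for $-1$). One verifies this by an explicit cut-and-paste: remove a neighborhood of $K'$ from $\Sigma\times[0,1]$, observe that the complement is a circle bundle over $K$ whose fibers are $\Sigma$ with an open disk removed, and compare the old meridian disk of $N(K')$ with the new one of slope $\pm 1$ to see that the reglueing differs from the trivial one by a fiber-preserving Dehn twist along $K$ that can be concentrated in a small neighborhood of the top boundary.

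Applying this local model to $\Sigma_i\times[-\epsilon,0]\subset U_i$ and extending by the identity elsewhere produces a homeomorphism from the new lower region $U_i''\subset M''$ onto $U_i$ which is the identity on $F_{i-1}$ and is the appropriate Dehn twist along $K$ on the top. Taking $\Sigma_i''$ to be the persisting upper boundary of $U_i''$, we obtain a surface isotopic to $\Sigma_i$ in $M\bbslash K$; a simple closed curve $c\subset\Sigma_i''$ bounds a disk in $U_i''$ if and only if its image under this homeomorphism bounds a disk in $U_i$, which gives precisely the description of the new attaching curves in the statement. Since $V_i$ and all other compression bodies are unaffected, we immediately read off the GSHS $\Xi''$.

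The only real obstacle is the Rolfsen/Lickorish twist identity itself; this is folklore but requires some care with sign conventions to match the surgery coefficient (positive or negative $1$) with the handedness (left or right) of the resulting Dehn twist, as well as with the choice of which side of the collar the push-off lies on. Once the local model is pinned down, the remaining identifications are routine.
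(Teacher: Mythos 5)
The paper offers no proof of this lemma at all --- it is introduced with ``The following lemma is well-known'' and stated without argument --- so there is no proof of record to compare against. Your write-up supplies the standard justification, and its overall structure is correct: localize the surgery to a collar $\Sigma_i\times[-\epsilon,0]\subset U_i$, observe that everything outside that collar (in particular $V_i$ and all other compression bodies and splitting surfaces) persists, invoke the classical fact that $(\pm1)$-surgery with surface framing on $K\times\{-\epsilon/2\}$ returns $\Sigma_i\times[-\epsilon,0]$ up to a homeomorphism that is the identity on the bottom and a Dehn twist $T_K^{\mp1}$ on the top, and then transport disks through the resulting homeomorphism $U_i''\to U_i$ to read off the new attaching curves. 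Your remark that the new attaching curves are the \emph{preimages} under the top-level twist (equivalently, images under the inverse twist) is the right bookkeeping, and flagging the sign conventions as the only delicate point is fair.

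One sentence in your verification of the local model is garbled and should be repaired: the complement of $N(K')$ in $\Sigma\times[0,1]$ is not ``a circle bundle over $K$ whose fibers are $\Sigma$ with an open disk removed'' (that object would be a mapping torus, which has the wrong topology). The correct local picture is to take an annulus neighborhood $A\subset\Sigma_i$ of $K$, so that $A\times[-\epsilon,0]$ is a solid torus containing the push-off $K'$ as a curve parallel to its core; the surface framing of $K'$ agrees with the framing induced by this solid torus, and a twist self-homeomorphism of $A\times[-\epsilon,0]$ supported near the annulus $A\times\{0\}$ carries the meridian of $N(K')$ to a curve of slope $\pm1$, identifying the surgered collar with the unsurgered one at the cost of a Dehn twist along $K$ on the top face. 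With that substitution your argument is complete and is exactly the folklore computation the paper is implicitly relying on.
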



\section{Proof of the main theorem}

In this section, we will prove Theorem~\ref{thm:SuturedGeneration}. We will induct on $HC(M,\gamma)$, and the inductive step is the following proposition.

\begin{prop}\label{prop:GenerateM}
Let $(M,\gamma)\in \mathcal X$ be a non-product sutured manifold, then $(M,\gamma)$ is generated by 
\[
\mathcal X^{<M}=\{(N,\zeta)\in \mathcal X|\:HC(N,\zeta)<HC(M,\gamma)\}.
\]
\end{prop}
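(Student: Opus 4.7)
The plan is to exhibit, for each non-product $(M,\gamma)\in\mathcal X$, a framed knot $K\subset M$ so that both the $0$-surgery $M_0$ and the $(+1)$-surgery $M_{+1}$ on $K$ lie in $\mathcal X^{<M}$; the surgery triad $(M,M_0,M_{+1})$ will then show $(M,\gamma)$ is generated by $\mathcal X^{<M}$. Fix a connected GSHS $\Xi$ realizing $HC(\Xi)=HC(M,\gamma)$, and let $\Sigma_{i_0}$ achieve the lexicographic maximum $hc(\Sigma_{i_0})=(c,\iota)$ inside $HC(\Xi)$. Minimality of $\Xi$ kills the easy cases: if any $\iota(\Sigma_i)=0$, a Scharlemann--Thompson weak reduction would replace $\Sigma_i$ by three surfaces of strictly smaller $c$; if any $\iota(\Sigma_i)=1$, a destabilization would reduce $c$ by two; either move would contradict minimality. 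Amalgamating trivial adjacent slabs (another $HC$-decreasing move) lets us further assume both $U_{i_0}$ and $V_{i_0}$ are non-trivial, so $\Sigma_{i_0}$ carries a non-separating $\alpha_0$ bounding a disk in $U_{i_0}$ and a non-separating $\beta_0$ bounding a disk in $V_{i_0}$.

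The main step is to pick a non-separating $K\subset\Sigma_{i_0}$, equipped with the surface framing, so that $K$ is disjoint from both $\alpha_0$ and $\beta_0$; such $K$ exists whenever $\alpha_0\cup\beta_0$ fails to fill $\Sigma_{i_0}$, and the small-genus/filling exceptions (including the $\iota=\infty$ case) are handled by taking $K$ to be a belt circle of a second $1$-handle of $U_{i_0}$. By Lemma~\ref{lem:FramedSurg}, $M_0$ inherits a GSHS $\Xi'$ with $\Sigma_{i_0}$ replaced by $(\Sigma_{i_0}^-, F', \Sigma_{i_0}^+)$: non-separating-ness of $K$ gives $c(F')=c-2$, so $hc(F')<hc(\Sigma_{i_0})$; the disjoint NSAC $(\alpha_0,K^-)$ on $\Sigma_{i_0}^-$ and the disjoint NSAC $(K^+,\beta_0)$ on $\Sigma_{i_0}^+$ force $\iota(\Sigma_{i_0}^\pm)=0$, so $hc(\Sigma_{i_0}^\pm)=(c,0)<(c,\iota)$. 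All three replacement surfaces have $hc$ strictly below the old lex-maximum, so $HC(\Xi')<HC(\Xi)$, giving $HC(M_0)<HC(M,\gamma)$. For $M_{+1}$, Lemma~\ref{lem:pm1Surg} provides $\Xi''$ identical to $\Xi$ except $\Sigma_{i_0}$ is replaced by $\Sigma_{i_0}''$ with its $\alpha$-curves Dehn-twisted along $K$; since $K$ is disjoint from $\alpha_0$, the twist fixes $\alpha_0$, and combining this with $K\cap\beta_0=\emptyset$ and the twist's effect on the other $\alpha$-curves yields a new NSAC of strictly smaller geometric intersection than $\iota$, so $hc(\Sigma_{i_0}'')<hc(\Sigma_{i_0})$ and $HC(M_{+1})<HC(M,\gamma)$.

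The principal obstacle throughout is the lexicographic bookkeeping: Lemma~\ref{lem:FramedSurg} inserts two additional surfaces into the sorted tuple, so a careless choice of $K$ can inflate $HC(\Xi')$ beyond $HC(\Xi)$; the disjointness conditions on $K$ are exactly what force each of the three new $hc$'s to strictly drop---one via a genus decrease, two via a new weak-reducing NSAC. The matching step for $M_{+1}$ is the subtlest piece, since generic Dehn twists do not reduce geometric intersection numbers; one must leverage that $K$ leaves $\alpha_0$ and $\beta_0$ fixed while reshuffling the rest of the $\alpha$-system, producing a controllable new compression curve that certifies $\iota(\Sigma_{i_0}'')<\iota$. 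The filling exceptions and $\iota=\infty$ cases reduce to the same surgery performed on a belt circle of a $1$-handle of $U_{i_0}$, where the drop in $c$ replaces the drop in $\iota$ as the source of the strict inequality.
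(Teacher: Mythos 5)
Your high-level strategy is the paper's: perform $0$-- and $(\pm1)$--surgery on a framed knot $K$ in the maximal-complexity surface $\Sigma_{\tau(1)}$, use Lemmas~\ref{lem:FramedSurg} and \ref{lem:pm1Surg} to produce GSHS's of the two surgered manifolds, and show both have smaller $HC$. But the core of the argument --- the choice of $K$ --- is where your proposal breaks, in two places.

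First, you cannot rule out $\iota(\Sigma_{\tau(1)})=0$ by minimality. A Scharlemann--Thompson weak reduction along disjoint $\alpha_0,\beta_0$ inserts a new thin level that may be \emph{disconnected} (each of $\alpha_0,\beta_0$ is non-separating, but $\alpha_0\cup\beta_0$ may separate $\Sigma_{\tau(1)}$, and in any case the compressed surface need not be connected), and the definition of $HC(M,\gamma)$ only minimizes over \emph{connected} GSHS's --- this is exactly the point of the paper's remark contrasting its complexity with Scharlemann--Thompson width. Consequently $\alpha\cap\beta=\emptyset$ survives as a genuine case (the paper's Lemma~\ref{lem:NoIntersection}), and once $\iota=0$ is possible your claimed strict inequality $\iota(\Sigma_{i_0}^{\pm})=0<\iota$ can fail. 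Second, and more seriously, taking $K$ disjoint from both $\alpha_0$ and $\beta_0$ destroys the $(\pm1)$--surgery half: the Dehn twist along such a $K$ fixes $\alpha_0$ and $\beta_0$, so $(\alpha_0,\beta_0)$ remains a pair of NSACs on $\Sigma_{i_0}''$ realizing the same intersection number, and $\iota(\Sigma_{i_0}'')\le\iota$ gives no strict drop; there may be no ``other $\alpha$-curves'' at all (e.g.\ $U_{i_0}$ a single $1$--handle), and no mechanism is given for producing a better pair. The paper's Lemma~\ref{lem:StandardTriad} makes the two competing requirements explicit: $K$ must meet $\alpha$ and $\beta$ little enough that the $0$--surgery complexity drops (conditions (\ref{eq:KIntersectLess}) or (\ref{eq:KIntersect1})), yet the twist along $K$ must genuinely reduce $|\alpha\cap\beta|$ (conditions (\ref{eq:alpha1IntersectLess}) or (\ref{eq:alpha1Intersect1})), which forces $K$ to intersect $\alpha$. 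Constructing such a $K$ in every case (disjoint curves, two opposite-sign intersections, adjacent same-sign intersections, and the alternating-sign case) is the actual content of Lemmas~\ref{lem:NoIntersection}--\ref{lem:AltSign}, and none of that work is present in your proposal. A smaller issue: a non-disk complementary component of $\alpha_0\cup\beta_0$ need not contain a curve that is non-separating in $\Sigma_{i_0}$, so even the existence claim for your $K$ is not justified.
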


From now on, let $(M,\gamma)\in \mathcal X$ be a non-product sutured manifold, and let $\Xi$ as in (\ref{eq:GHS}) be a connected GSHS of $(M,\gamma)$ with $HC(\Xi)=HC(M,\gamma)$.
Let $i=\tau(1)$, where $\tau$ is as in Definition~\ref{defn:HC}, and let $(\alpha,\beta)$ be a pair of NSACs in $\Sigma_i$ with $|\alpha\cap \beta|=\iota(\Sigma_i)$ if $\iota(\Sigma_i)<\infty$. We orient $\alpha,\beta$ arbitrarily.

\begin{lem}\label{lem:No1infty}
With the above notations, we have $\iota(\Sigma_i)\ne1,\infty$.
\end{lem}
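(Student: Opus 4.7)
The plan is to prove the lemma by contradiction in each case, by producing a connected GSHS $\Xi^{\ast}$ of $(M,\gamma)$ with $HC(\Xi^{\ast})<HC(\Xi)$, violating the minimality of $HC(\Xi)=HC(M,\gamma)$.

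For $\iota(\Sigma_i)=1$, I will use classical destabilization. The NSACs $\alpha,\beta$ meet transversely in a single point of $\Sigma_i$; after handle slides in $U_i$ and $V_i$ I may assume $\alpha$ is the cocore of some $1$-handle of $U_i$ and $\beta$ is the attaching curve of some $2$-handle of $V_i$, and because $|\alpha\cap\beta|=1$ these handles form a cancelling pair. Cancelling them yields sutured compression bodies $U_i^{\ast},V_i^{\ast}$ with one fewer handle apiece, meeting along $\Sigma_i^{\ast}$, the surface obtained from $\Sigma_i$ by compressing along $\alpha$ (equivalently along $\beta$). Non-separability of $\alpha$ guarantees that $\Sigma_i^{\ast}$ is connected and forces $g(\Sigma_i)\geq 1$, so $\chi(\Sigma_i)\leq 0$ and $\chi(\Sigma_i^{\ast})=\chi(\Sigma_i)+2$ gives $c(\Sigma_i^{\ast})<c(\Sigma_i)$, hence $hc(\Sigma_i^{\ast})<hc(\Sigma_i)$ in the lex order on $(c,\iota)$. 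Replacing $\Sigma_i$ with $\Sigma_i^{\ast}$ in $\Xi$ produces a connected GSHS whose multiset of $hc$-values has exactly one entry strictly decreased, so the sorted tuple drops strictly in the lexicographic order.

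For $\iota(\Sigma_i)=\infty$, I first observe that this forces $U_i$ or $V_i$ to be a product: since $F_{i-1}$ is connected, the cocore of each $1$-handle of $U_i$ is a non-separating simple closed curve on $\Sigma_i$ bounding a disk in $U_i$, and symmetrically for $V_i$; so whenever both sides carry handles, a pair of NSACs exists and $\iota(\Sigma_i)$ is finite. Assume WLOG that $U_i$ is a product, so $\Sigma_i$ is parallel to $F_{i-1}$ through $U_i$. When $i\geq 2$, I construct $\Xi^{\ast}$ by deleting the pair $F_{i-1},\Sigma_i$ from the sequence and amalgamating $V_{i-1}\cup U_i\cup V_i$ into a single $V$-type compression body $\widetilde V$ from $\Sigma_{i-1}$ to $F_i$: the product $U_i$ lets me push each $2$-handle of $V_i$, originally attached along $F_{i-1}$, through $U_i$ and, after handle slides through the $2$-handles of $V_{i-1}$, re-attach it along $\Sigma_{i-1}\times\{1\}$, so that $\widetilde V$ is $\Sigma_{i-1}\times[0,1]$ with all the $2$-handles of $V_{i-1}$ and $V_i$ attached on top. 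The resulting $\Xi^{\ast}$ has $n-1$ thick surfaces, is connected, and its $hc$-multiset equals that of $\Xi$ with the entry $hc(\Sigma_i)$ removed, so $HC(\Xi^{\ast})<HC(\Xi)$. The symmetric argument handles $V_i$ a product with $i\leq n-1$.

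The main obstacle I expect is the residual boundary configuration $i=\tau(1)=1$ with $U_1$ a product (or the mirror case $i=n$ with $V_n$ a product), in which there is no adjacent compression body to absorb $U_1$ into, so the direct amalgamation is unavailable. Here $\Sigma_i$ is parallel to $R_{\mp}(\gamma)$, $hc(\Sigma_i)=(c(F),\infty)$ is the minimum possible value of $hc$, and closing this case requires a more delicate argument. My plan is to combine the maximality of $hc(\Sigma_i)$ with the reverse-orientation analogue of Example~\ref{exam:MinHC} (giving the symmetric lower bound $c(\Sigma_n)\geq c(F)$) and an inductive analysis of the intermediate $c(F_j)$'s and $c(\Sigma_j)$'s along the sequence to force every compression body of $\Xi$ to be a product, yielding $\Xi=\Omega$ and $(M,\gamma)=(P,\delta)$ and thereby contradicting the non-product hypothesis; making this propagation tight is the step I expect to be the hardest.
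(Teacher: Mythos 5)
For the two cases the paper actually argues, your proof coincides with Ni's: the $\iota(\Sigma_i)=1$ case is handled by cancelling the $1$--handle/$2$--handle pair dual to $(\alpha,\beta)$ (the $F_j$'s are untouched, so connectedness persists and exactly one entry of the complexity tuple drops), and the $\iota(\Sigma_i)=\infty$ case with $U_i$ a product and $i\ge2$ is handled by sliding the $2$--handles of $V_i$ down onto $\Sigma_{i-1}$, which deletes the entry $hc(\Sigma_i)$ without increasing $hc(\Sigma_{i-1})$. Your justification that $\iota(\Sigma_i)=\infty$ forces one of $U_i,V_i$ to be a product (via connectedness of the $F_j$'s) is also the paper's.

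The genuine gap is the boundary configuration you flag yourself: $i=1$ with $U_1$ a product and $V_1$ not a product (or the mirror case at $i=n$). You leave it open, and the repair you sketch --- propagating the maximality of $hc(\Sigma_1)=(c(F),\infty)$ along the sequence to force every compression body to be a product --- cannot succeed once $F$ has positive genus. Concretely, take $F=T^2$ and $(M,\gamma)=\bigl((S^1\times D^2)\#(S^1\times D^2),\emptyset\bigr)$ with $R_\pm\cong T^2$. The connected GSHS $(\Sigma_1,F_1,\Sigma_2)=(T^2,S^2,T^2)$, in which $U_1$ and $V_2$ are products, $V_1$ is a single $2$--handle and $U_2$ a single $1$--handle, has $HC=((1,\infty),(1,\infty))$, and this is minimal: for any connected GSHS of any non-product element of $\mathcal X(T^2)$ one has $c(\Sigma_1)\ge1$ and $c(\Sigma_n)\ge1$, equality in either forcing that surface to have $\iota=\infty$, and strict inequality forcing an entry $\ge(3,\cdot)$; also $n\ge2$. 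So this non-product $M$ has $\iota(\Sigma_{\tau(1)})=\infty$: the maximality of $hc(\Sigma_1)$ only forces $V_n$ to be a product and says nothing about the interior compression bodies, whose $F_j$ may dip below $c(F)$. (For planar $F$, in particular $S^2$ and $D^2$, the configuration is vacuous, since compressing $\Sigma_1\cong F$ along any simple closed curve disconnects it; for genus at least one it genuinely occurs.) To be fair, the paper's own proof writes ``without loss of generality $U_i$ is a product'' and then appeals to $\Sigma_{i-1}$, so it silently skips exactly this case; you have correctly located a soft spot in the argument, but the fix you propose would not close it, and some genuinely different input (for instance a separate treatment of this thin, reducible configuration) is required.
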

\begin{proof}
If $|\alpha\cap \beta|=1=\iota(\Sigma_i)$, we can cancel the $1$--handle corresponding to $\alpha$ with the $2$--handle corresponding to $\beta$, thus decrease $c(\Sigma_i)$ and hence $HC(\Xi)$. The new GSHS has the same collection of $F_i$'s, so it is still connected.
This contradicts the choice of $\Xi$.

If $\iota(\Sigma_i)=\infty$, since $\Xi$ is connected, either there are no $1$--handles in $U_i$, or there are no $2$--handles in $V_i$. That is,
one of $U_i,V_i$ must be a product sutured manifold. 

Without loss of generality, we may assume $U_i$ is a product sutured manifold. Then the $2$--handles in $V_i$ may be viewed as attached to $\Sigma_{i-1}$. We then get a new connected GSHS of $(M,\gamma)$:
\[
\Xi'=(\Sigma_1,F_1,\dots,F_{i-2},\Sigma'_{i-1},F_i,\Sigma_{i+1},F_{i+1},\dots,\Sigma_n),
\]
where $\Sigma_{i-1}'$ is just $\Sigma_{i-1}$, but now there are additional attaching curves bounding disks in $V_{i-1}'$, the sutured compression body between $\Sigma'_{i-1}$ and $F_i$. So we have
\[
c(\Sigma_{i-1}')=c(\Sigma_{i-1}),\quad \iota(\Sigma_{i-1}')\le\iota(\Sigma_{i-1}),
\]
which implies $hc(\Sigma_{i-1}')\le hc(\Sigma_{i-1})$. 

Moreover, in $HC(\Xi')$, there is no component corresponding to the component $hc(\Sigma_i)$ in $HC(\Xi)$. So we have
\[
HC(\Xi')<HC(\Xi),
\]
a contradiction to the choice of $\Xi$.
\end{proof}

\begin{lem}\label{lem:StandardTriad}
Suppose that there exists a non-separating simple closed curve $K\subset\Sigma_i$, and a curve $\alpha_1$ which is the image of $\alpha$ under a (left-handed or right-handed) Dehn twist along $K$, satisfying the following two conditions:
\begin{itemize}
\item either 
\begin{equation}\label{eq:KIntersectLess}
\max\{|K\cap \alpha|,|K\cap \beta|\}<|\alpha\cap\beta|,
\end{equation}
or
\begin{equation}\label{eq:KIntersect1}
|K\cap \alpha|=|K\cap \beta|=1,
\end{equation}
\item $\alpha_1$ can be isotoped so that either
\begin{equation}\label{eq:alpha1IntersectLess}
|\alpha_1\cap\beta|<|\alpha\cap\beta|
\end{equation}
or 
\begin{equation}\label{eq:alpha1Intersect1}
|\alpha_1\cap\beta|=1.
\end{equation}
\end{itemize}
Then the conclusion of Proposition~\ref{prop:GenerateM} holds true for $(M,\gamma)$.
\end{lem}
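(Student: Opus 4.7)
The plan is to place $(M,\gamma)$ into a surgery triad whose other two members both lie in $\mathcal X^{<M}$; since any set generated by $\mathcal X^{<M}$ through surgery triads must contain any such third vertex, this will suffice. The triad is built from the framed knot $K\subset \Sigma_i\subset M$ with surface framing (reframed by $-\mu$ if necessary), so that $0$--surgery on $K$ produces $M_0$ (in the sense of Lemma~\ref{lem:FramedSurg}) and $1$--surgery on $K$ produces $M_\epsilon$ (in the sense of Lemma~\ref{lem:pm1Surg}), where $\epsilon\in\{+1,-1\}$ is chosen so that the corresponding Dehn twist sends $\alpha$ to the given $\alpha_1$. Both $M_0$ and $M_\epsilon$ are connected with $R_\pm\cong F$ since the surgery is performed in the interior of $M$, so they lie in $\mathcal X$. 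It then remains to verify that $HC(M_0)<HC(M,\gamma)$ and $HC(M_\epsilon)<HC(M,\gamma)$.

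For $M_\epsilon$, Lemma~\ref{lem:pm1Surg} supplies a connected GSHS $\Xi''$ differing from $\Xi$ only in that $\Sigma_i$ is replaced by an isotopic surface $\Sigma_i''$ whose $U$--side attaching curves have been Dehn--twisted; in particular $c(\Sigma_i'')=c(\Sigma_i)$, and $(\alpha_1,\beta)$ is a valid pair of NSACs on $\Sigma_i''$ because non-separatingness is preserved by any self-homeomorphism of the surface. Hence $\iota(\Sigma_i'')\le |\alpha_1\cap\beta|$, and by either (\ref{eq:alpha1IntersectLess}) or (\ref{eq:alpha1Intersect1}) combined with Lemma~\ref{lem:No1infty} (which forces $\iota(\Sigma_i)\ge 2$), this upper bound is strictly less than $\iota(\Sigma_i)$. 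Consequently $hc(\Sigma_i'')<hc(\Sigma_{\tau(1)})$, and since no other surface changes, $HC(\Xi'')<HC(\Xi)=HC(M,\gamma)$.

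For $M_0$, Lemma~\ref{lem:FramedSurg} produces a connected GSHS $\Xi'$ (connectedness uses that $K$ is non-separating) in which $\Sigma_i$ is replaced by the three surfaces $\Sigma_i^-, F', \Sigma_i^+$; only the two $\Sigma_i^\pm$ enter into $HC(\Xi')$, and both are parallel copies of $\Sigma_i$ so $c(\Sigma_i^\pm)=c(\Sigma_i)$. On $\Sigma_i^-$ the new $V$--side contains exactly one $2$--handle, with attaching curve $K^-$, so $(\alpha,K^-)$ is a pair of NSACs (using that $K^-$ is non-separating as a parallel copy of the non-separating $K$), giving $\iota(\Sigma_i^-)\le |\alpha\cap K|$; symmetrically $\iota(\Sigma_i^+)\le |\beta\cap K|$. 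The first bullet of the hypothesis combined with $\iota(\Sigma_i)\ge 2$ forces both upper bounds to be strictly less than $\iota(\Sigma_i)$, so $hc(\Sigma_i^\pm)<hc(\Sigma_{\tau(1)})$. Since $HC(\Xi')$ is obtained from $HC(\Xi)$ by removing the top lex--entry $hc(\Sigma_i)$ and inserting two strictly smaller entries, $HC(\Xi')<HC(M,\gamma)$. I expect the only real work to be organisational bookkeeping --- confirming for each new distinguished surface which curves bound disks on which side, so that the pairs exhibited genuinely qualify as NSACs --- while the complexity comparison itself is immediate from the definitions once this is pinned down.
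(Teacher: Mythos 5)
Your overall architecture is the same as the paper's: form the triad $(M,M',M'')$ from $K$ with the surface framing, use Lemma~\ref{lem:FramedSurg} for the $0$--surgery and Lemma~\ref{lem:pm1Surg} for the $(\pm1)$--surgery, exhibit the pairs of NSACs $(\alpha,K^-)$, $(K^+,\beta)$ and $(\alpha_1,\beta)$, and compare complexities. However, there is a genuine gap in how you dispose of the ``$=1$'' branches (\ref{eq:KIntersect1}) and (\ref{eq:alpha1Intersect1}). You claim that Lemma~\ref{lem:No1infty} forces $\iota(\Sigma_i)\ge 2$; it does not --- it only rules out the values $1$ and $\infty$, and $\iota(\Sigma_i)=0$ is entirely possible. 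Indeed this case actually occurs: Lemma~\ref{lem:NoIntersection} invokes the present lemma precisely when $\alpha\cap\beta=\emptyset$, i.e.\ $\iota(\Sigma_i)=0$, with hypotheses (\ref{eq:KIntersect1}) and (\ref{eq:alpha1Intersect1}) in force. In that situation your bounds $\iota(\Sigma_i^{\pm})\le 1$ and $\iota(\Sigma_i'')\le 1$ are useless, since $1\not<0$, and the second coordinate of $hc$ does not decrease. So your argument proves nothing in exactly the case the ``$=1$'' alternatives were designed for.

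The missing idea is handle cancellation. When $|K\cap\alpha|=|K\cap\beta|=1$, the single $2$--handle between $\Sigma_i^-$ and $F'$ cancels against the $1$--handle of $U_i$ corresponding to $\alpha$, and the single $1$--handle between $F'$ and $\Sigma_i^+$ cancels against the $2$--handle of $V_i$ corresponding to $\beta$; this collapses the three new surfaces back to a single surface $\Sigma_i^*$ with $c(\Sigma_i^*)=c(F')<c(\Sigma_i)$, so the \emph{first} coordinate of $hc$ drops. Likewise, when $|\alpha_1\cap\beta|=1$ one cancels that $1$--handle/$2$--handle pair on $\Sigma_i''$ to decrease $c(\Sigma_i'')$. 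With these cancellations the strict inequalities $HC(M')<HC(M,\gamma)$ and $HC(M'')<HC(M,\gamma)$ hold in all cases, which is how the paper completes the proof. Your treatment of the strict-inequality branches (\ref{eq:KIntersectLess}) and (\ref{eq:alpha1IntersectLess}) is fine as written.
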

\begin{proof}
Let $M'$ be the $0$--surgery on $K$, and let $M''$ be the $(+1)$ or $(-1)$--surgery on $K$, with the sign chosen so that the corresponding Dehn twist (as in Lemma~\ref{lem:pm1Surg}) applying to $\alpha$ yields $\alpha_1$.

By Lemma~\ref{lem:FramedSurg}, $M'$ has a GSHS $\Xi'$ as in (\ref{eq:Xi'}). Since $K$ is non-separating in $\Sigma$, $F'$ is still connected, so $\Xi'$ is connected.
On $\Sigma_i^-$, we have a pair of NSACs $(\alpha,K)$. On $\Sigma_i^+$, we have a pair of NSACs $(K,\beta)$. If (\ref{eq:KIntersectLess}) holds, then 
\[
\max\{
\iota(\Sigma_i^-),\iota(\Sigma_i^+)\}<\iota(\Sigma_i).
\]
If (\ref{eq:KIntersect1}) holds, then we can cancel the only $2$--handle between $\Sigma_i^-$ and $F'$ with the $1$--handle corresponding to $\alpha$, and cancel the only $1$--handle between $F'$ and $\Sigma_i^+$ with the $2$--handle corresponding to $\beta$. This gives us a new connected GSHS
\[
(\Sigma_1,F_1,\dots,F_{i-1},\Sigma_i^*,F_i,\dots, \Sigma_{n-1},F_{n-1},\Sigma_n)
\]
with $c(\Sigma_i^*)=c(F')<c(\Sigma_i)$. 
In either case, we have $HC(M')<HC(M)$.

By Lemma~\ref{lem:pm1Surg}, $M''$ has a connected GSHS $\Xi''$ as in (\ref{eq:Xi''}), and a pair of NSACs $(\alpha_1,\beta)$ on $\Sigma_i''$.
If (\ref{eq:alpha1IntersectLess}) holds, $\iota(\Sigma_i'')<\iota(\Sigma_i)$. If  (\ref{eq:alpha1Intersect1}) holds, we can cancel the corresponding $1$--handle/$2$--handle pair to decrease $c(\Sigma_i'')$. In either case,
we have $HC(M'')<HC(M)$. 

Now our conclusion holds true since $(M,M',M'')$ is a surgery triad.
\end{proof}

\begin{lem}\label{lem:NoIntersection}
If $\alpha\cap \beta=\emptyset$, then the conclusion of Proposition~\ref{prop:GenerateM} holds true for $(M,\gamma)$.
\end{lem}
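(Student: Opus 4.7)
The hypothesis $\alpha \cap \beta = \emptyset$ gives $|\alpha \cap \beta| = 0$, so the strict inequalities (\ref{eq:KIntersectLess}) and (\ref{eq:alpha1IntersectLess}) in Lemma~\ref{lem:StandardTriad} cannot possibly hold; I must instead verify the two equalities (\ref{eq:KIntersect1}) and (\ref{eq:alpha1Intersect1}). The plan is therefore to produce a non-separating simple closed curve $K \subset \Sigma_i$ with $|K \cap \alpha| = |K \cap \beta| = 1$, set $\alpha_1$ to be the image of $\alpha$ under an appropriate Dehn twist along $K$, and then invoke Lemma~\ref{lem:StandardTriad}.

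Once such a $K$ is available, the condition on $\alpha_1$ is automatic: because $\alpha$ meets $K$ transversely in a single point and is disjoint from $\beta$, up to isotopy $T_K(\alpha)$ coincides with $\alpha$ outside a neighborhood of $K \cap \alpha$ and there runs along a parallel copy of $K$; hence the minimal geometric intersection of $T_K(\alpha)$ with $\beta$ equals $|K \cap \beta| = 1$, giving (\ref{eq:alpha1Intersect1}).

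The real work is the construction of $K$. Cut $\Sigma_i$ along $\alpha \cup \beta$ to obtain a surface $\Sigma'$ with four new boundary circles $\alpha^+, \alpha^-, \beta^+, \beta^-$. If $\alpha \cup \beta$ separates $\Sigma_i$, then $\Sigma'$ has two components, and the non-separability of $\alpha$ and $\beta$ individually forces each component to carry exactly one of the $\alpha^{\pm}$ and exactly one of the $\beta^{\pm}$ on its boundary; one picks a properly embedded arc in each component joining its $\alpha$-boundary to its $\beta$-boundary. If $\alpha \cup \beta$ does not separate, then $\Sigma'$ is connected, and a short case analysis yields a pair of disjoint properly embedded arcs with endpoints $\{\alpha^+, \beta^{\epsilon}\}$ and $\{\alpha^-, \beta^{-\epsilon}\}$ for a suitable sign $\epsilon$ (one starts with any arc from $\alpha^+$ to $\beta^+$; if cutting along it places $\alpha^-$ and $\beta^-$ in different components, one changes $\epsilon$ or replaces the initial arc to break the unwanted separation). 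In either case, gluing these arcs together with short connecting arcs inside $N(\alpha)$ and $N(\beta)$ yields an embedded simple closed curve $K \subset \Sigma_i$ meeting $\alpha$ and $\beta$ each in a single point; since $[K] \cdot [\alpha] = \pm 1$, this $K$ is automatically non-separating in $\Sigma_i$. The main obstacle is precisely the disjoint-arcs argument in the non-separating subcase, which is standard but requires attention to which arcs are cut-and-separating in $\Sigma'$.
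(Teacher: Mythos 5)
Your proposal is correct and follows essentially the same route as the paper: both construct a non-separating simple closed curve $K$ meeting each of $\alpha$ and $\beta$ exactly once (by analyzing the one or two components of $\Sigma_i$ cut along $\alpha\cup\beta$) and then apply a Dehn twist along $K$ to reach conditions (\ref{eq:KIntersect1}) and (\ref{eq:alpha1Intersect1}) of Lemma~\ref{lem:StandardTriad}. The only remark is that your ``short case analysis'' in the connected subcase is not needed, since a properly embedded arc joining two distinct boundary circles of a connected surface never separates it, so the second arc can always be found disjoint from the first.
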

\begin{proof}
Since $\alpha$ is non-separating, $\Sigma_i\bbslash \alpha$ is connected. Since $\beta$ is non-separating in $\Sigma_i$, $\Sigma_i\bbslash (\alpha\cup\beta)$ has $1$ or $2$ components, and none of the components has only one boundary component. In either case, we can find a simple closed curve $K$ which intersects each of $\alpha$ and $\beta$ exactly once. 

Let $\alpha_1$ be the image of $\alpha$ under the left-handed Dehn twist along $K$, as shown in Figure~\ref{fig:DehnTwist0}. Then $|\alpha_1\cap\beta|=1$.

Now our conclusion holds true by Lemma~\ref{lem:StandardTriad}.
\end{proof}

\begin{figure}[ht]
\begin{picture}(345,135)
\put(15,0){\scalebox{0.58}{\includegraphics*
{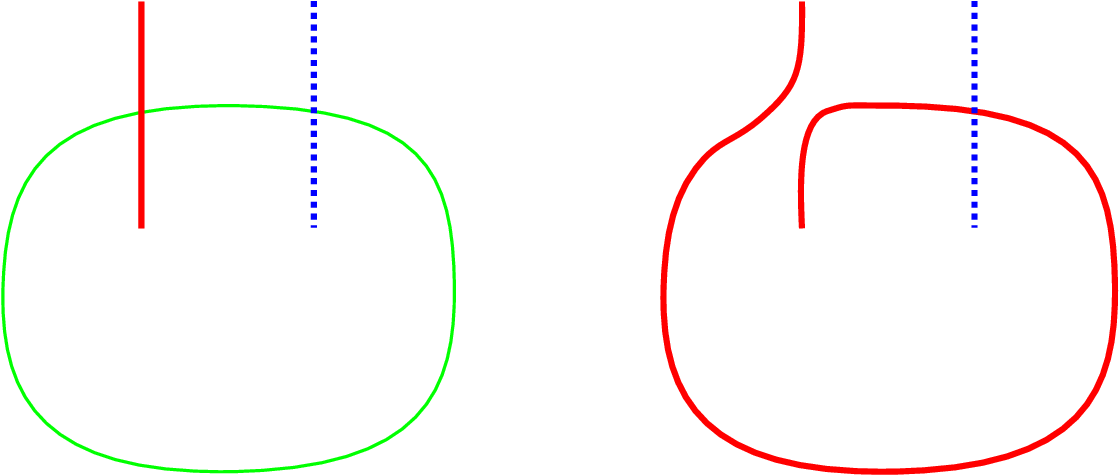}}}

\put(57,118){$\alpha$}

\put(105,118){$\beta$}

\put(18,46){$K$}

\put(241,118){$\alpha_1$}

\put(290,118){$\beta$}

\end{picture}
\caption{\label{fig:DehnTwist0}The local picture near $K$.}
\end{figure}

\begin{lem}\label{lem:TwoPoints}
If $\alpha\cap \beta$ has exactly two points, which have opposite signs, then the conclusion of Proposition~\ref{prop:GenerateM} holds true for $(M,\gamma)$.
\end{lem}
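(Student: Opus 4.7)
The plan is to apply Lemma~\ref{lem:StandardTriad} with conditions (\ref{eq:KIntersect1}) and (\ref{eq:alpha1Intersect1}). We must construct a non-separating simple closed curve $K \subset \Sigma_i$ meeting $\alpha$ and $\beta$ each in exactly one point, and verify that a suitable Dehn twist of $\alpha$ along $K$ yields a curve $\alpha_1$ with $|\alpha_1 \cap \beta| = 1$.

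First I would work locally near one of the intersection points, say $p$. In a small disk $D_p$ around $p$, the curves $\alpha$ and $\beta$ divide $D_p$ into four quadrants. Choose a short arc $\eta \subset D_p$ running between two diagonally opposite quadrants; such an arc crosses $\alpha$ at a single point $r_\alpha$ and $\beta$ at a single point $r_\beta$, both near $p$. Then extend $\eta$ to a simple closed curve $K$ by joining its two endpoints through an arc $\zeta \subset \Sigma_i \setminus (\alpha \cup \beta)$. For such a $\zeta$ to exist, the two endpoints of $\eta$ must lie in the same component of $\Sigma_i \setminus (\alpha \cup \beta)$; if this fails for the first chosen pair of quadrants at $p$, one tries the other diagonally opposite pair at $p$, or repeats the construction at $q$. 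A case analysis of the complementary regions of $\alpha \cup \beta$, together with the non-separating property of $\alpha$ and $\beta$ (which guarantees that $\Sigma_i \setminus \alpha$ and $\Sigma_i \setminus \beta$ are connected), shows that at least one such construction produces a valid $\zeta$. Since $|K \cap \alpha| = 1$ is odd, $[K] \ne 0$ in $H_1(\Sigma_i; \mathbb{Z}/2)$, so $K$ is automatically non-separating.

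Now let $\alpha_1$ be the image of $\alpha$ under a Dehn twist along $K$. Near $r_\alpha$, the twist replaces a short arc of $\alpha$ with an arc that wraps once around $K$, and this wrap introduces a new intersection $r' \in \alpha_1 \cap \beta$ close to $r_\beta$ (and hence close to $p$). Consequently $\alpha_1 \cap \beta$ consists of the three points $p$, $q$, and $r'$, with $r'$ lying inside $D_p$. Choosing the handedness of the Dehn twist so that the local sign of $r'$ is opposite to the sign of $p$, the two points $p$ and $r'$ cobound an embedded bigon inside $D_p$, bounded by short subarcs of $\alpha_1$ and $\beta$; removing this bigon by an isotopy supported in $D_p$ cancels both intersections. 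Since $q$ lies outside $D_p$ it is undisturbed, yielding $|\alpha_1 \cap \beta| = 1$, as required by (\ref{eq:alpha1Intersect1}).

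With both conditions (\ref{eq:KIntersect1}) and (\ref{eq:alpha1Intersect1}) verified, Lemma~\ref{lem:StandardTriad} then gives the desired conclusion for $(M,\gamma)$. The hardest step is the connectivity argument: showing that at $p$ or at $q$, some pair of diagonally opposite quadrants lies in a common component of $\Sigma_i \setminus (\alpha \cup \beta)$. This is where the non-separating hypotheses on $\alpha$ and $\beta$ are used in an essential way, and it requires the most careful case analysis.
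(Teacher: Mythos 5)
Your construction is the same as the paper's: produce a simple closed curve $K$ passing ``diagonally'' through a neighborhood of one intersection point, meeting each of $\alpha$ and $\beta$ exactly once, and then twist $\alpha$ along $K$ so that the two old intersection points merge into one, verifying conditions (\ref{eq:KIntersect1}) and (\ref{eq:alpha1Intersect1}) of Lemma~\ref{lem:StandardTriad}. The Dehn-twist-and-bigon bookkeeping is fine, and so is the parity observation that $|K\cap\alpha|=1$ forces $K$ to be non-separating. However, there is a genuine gap at exactly the step you yourself flag as the hardest one: you assert, but never prove, that some pair of diagonally opposite quadrants at an intersection point lies in a single component of $\Sigma_i\setminus(\alpha\cup\beta)$. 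Deferring this to an unspecified ``case analysis'' is not a proof, and this claim is the entire mathematical content of the lemma beyond what Lemma~\ref{lem:StandardTriad} already provides; without it there is no guarantee that the closing arc $\zeta$ exists at all.

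For completeness, here is how the paper establishes the claim at a single point $x\in\alpha\cap\beta$ (so your fallback to the other point $q$ is not even needed). Label the quadrants at $x$ cyclically by $1,2,3,4$, so the opposite pairs are $\{1,3\}$ and $\{2,4\}$, the curve $\alpha$ locally separates $\{1,4\}$ from $\{2,3\}$, and $\beta$ locally separates $\{1,2\}$ from $\{3,4\}$. Since $|\alpha\cap\beta|=2$, every arc of $\alpha\setminus\beta$ and of $\beta\setminus\alpha$ has $x$ as an endpoint, so each such arc has, on its two sides, the components containing quadrants $1$ and $4$, or those containing quadrants $2$ and $3$ (for arcs of $\beta$), and similarly for arcs of $\alpha$. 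Suppose no opposite pair of quadrants shares a component. Because $\alpha$ is non-separating, there is a path in $\Sigma_i\setminus\alpha$ joining the two sides of $\alpha$ at $x$; each crossing of $\beta$ moves the path between the components of quadrants $1$ and $4$ or between those of quadrants $2$ and $3$, so at some crossing one of the components containing quadrant $1$ or $4$ must coincide with one containing quadrant $2$ or $3$. Ruling out the opposite pairs, we may assume quadrants $1$ and $2$ share a component. Repeating the argument with $\beta$ non-separating shows that one of quadrants $1,2$ shares a component with one of quadrants $3,4$, which combined with the previous coincidence forces an opposite pair into one component --- a contradiction. This is the argument your proposal is missing; with it supplied, the rest of what you wrote goes through.
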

\begin{proof}
Let $x\in\alpha\cap \beta$. Consider the local picture near $x$ as in Figure~\ref{fig:Quadrants}. Since $|\alpha\cap \beta|=2$, every component of $\Sigma\setminus(\alpha\cup \beta)$ must contain at least one of the four (right) angles at $x$. 
We claim that there exists a pair of opposite angles at $x$, which are contained in the same component.

Assume that the claim is not true. Since $\alpha$ is non-separating, there exists a path in $\Sigma_i\setminus \alpha$ connecting two sides of $\alpha$. Note that whenever we cross $\beta$, we either move between the component containing Angle 1 and the component containing Angle 4, or move between the component containing Angle 2 and the component containing Angle 3.
It follows that one of Angle~1 and Angle~4 is in the same component as one of Angle~2 and Angle~3. By our assumption, these two angles cannot be opposite angles. Without loss of generality, we may assume Angle~1 and Angle~2 are in the same component. Similarly, since $\beta$ is non-separating, one of Angle~1 and Angle~2 is in the same component as one of Angle~3 and Angle~4. Since Angle~1 and Angle~2 are already in the same component, this forces a pair of opposite angles to be in the same component.

\begin{figure}[ht]
\begin{picture}(345,115)
\put(115,0){\scalebox{0.58}{\includegraphics*
{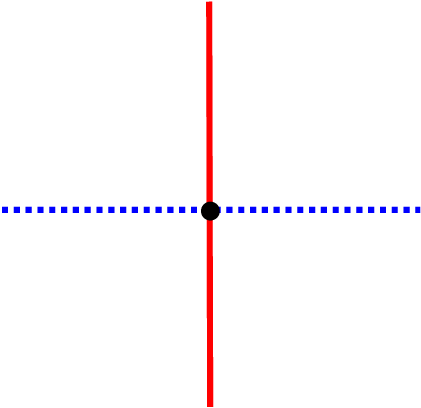}}}

\put(163,105){$\alpha$}

\put(176,46){$x$}

\put(115,45){$\beta$}

\put(195,77){$1$}

\put(145,77){$2$}

\put(145,27){$3$}

\put(195,27){$4$}

\end{picture}
\caption{\label{fig:Quadrants}The local picture near $x$. The four angles are labeled with $1,2,3,4$.}
\end{figure}

Using the claim, we can find a simple closed curve $K$ which intersects each of $\alpha$ and $\beta$ exactly once, and $K\cap \alpha=K\cap\beta=\{x\}$. As in the right part of Figure~\ref{fig:DehnTwist2}, the curve $\alpha_1$ is obtained from $\alpha$ by a Dehn twist along $K$, and $|\alpha_1\cap \beta|=1$. Now our conclusion follows from Lemma~\ref{lem:StandardTriad}.
\end{proof}

\begin{figure}[ht]
\begin{picture}(345,180)
\put(15,0){\scalebox{0.58}{\includegraphics*
{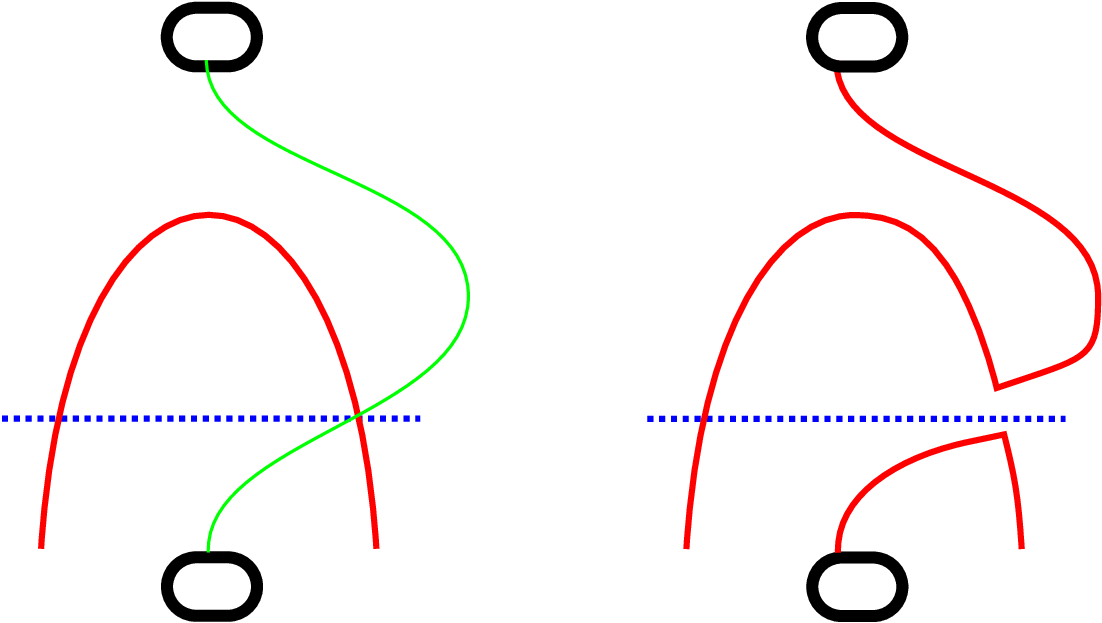}}}

\put(69,118){$\alpha$}

\put(107,130){$K$}

\put(12,46){$\beta$}

\put(253,118){$\alpha_1$}

\put(193,46){$\beta$}

\end{picture}
\caption{\label{fig:DehnTwist2}In each part of the picture, two ovals are glued together by a vertical reflection to form a tube in the surface.}
\end{figure}

\begin{lem}\label{lem:SameSign}
If there exist two intersection points $x,y\in \alpha\cap \beta$ of the same sign, and an arc $a$ in $\alpha$ connecting $x$ to $y$, such that the interior of $a$ does not intersect $\beta$, 
then the conclusion of Proposition~\ref{prop:GenerateM} holds true for $(M,\gamma)$.
\end{lem}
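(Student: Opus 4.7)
My plan is to apply Lemma~\ref{lem:StandardTriad} by constructing a non-separating simple closed curve $K\subset\Sigma_i$ together with an auxiliary curve $\alpha_1$ obtained from $\alpha$ by a Dehn twist along $K$. Write $n:=|\alpha\cap\beta|$, let $b$ be one of the two arcs of $\beta$ from $x$ to $y$, let $b^c$ denote the other, and let $m$ be the number of points of $\alpha\cap\beta$ lying in the interior of $b$. Consider the closed curve $a\cup b^c$, which has corners at $x$ and $y$, and smooth both corners using the \emph{oriented resolution} dictated by the orientations of $\alpha$ and $\beta$; pushing $a$ slightly off $\alpha$ and $b^c$ slightly off $\beta$ in the directions prescribed by these resolutions, one obtains a simple closed curve $K$.

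The essential geometric point, which uses the same-sign hypothesis crucially, is that the oriented resolutions at $x$ and $y$ force the pushoff of $a$ to end on \emph{opposite} sides of $\alpha$ at its two endpoints, and likewise force the pushoff of $b^c$ to end on opposite sides of $\beta$. Consequently the pushed-off $a$ must cross $\alpha$ exactly once in mid-arc, and the pushed-off $b^c$ must cross $\beta$ exactly once. Combined with the $n-m-2$ interior crossings of $b^c$ with $\alpha$, a direct count gives $|K\cap\alpha|=n-m-1$ and $|K\cap\beta|=1$. Since $|K\cap\beta|=1$ is odd, $K$ is non-separating; and $|K\cap\alpha|\le n-1<n$, so condition~(\ref{eq:KIntersectLess}) of Lemma~\ref{lem:StandardTriad} is always satisfied (with the borderline case $m=n-2$ giving $|K\cap\alpha|=|K\cap\beta|=1$, matching~(\ref{eq:KIntersect1})).

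For $\alpha_1$, I would apply the Dehn twist along $K$ in the handedness that decreases the algebraic intersection with $\beta$, which in the favorable sign configuration gives $\langle\alpha_1,\beta\rangle=\langle\alpha,\beta\rangle-\langle\alpha,K\rangle\langle K,\beta\rangle=m+1$. Because $K$ is modeled on the oriented resolution of $\alpha\cup\beta$ at $x$ and $y$, each of the $n-m-2$ interior points of $\alpha\cap K$ lies adjacent to a corresponding point of $\alpha\cap\beta$, and the twist creates a bigon between the new and old crossings that can be isotoped away. After all such bigons are removed, $|\alpha_1\cap\beta|=m+1<n$ (and equals $1$ when $m=0$, which includes the case $n=2$), so condition~(\ref{eq:alpha1IntersectLess}) or~(\ref{eq:alpha1Intersect1}) of Lemma~\ref{lem:StandardTriad} is verified.

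The hard part will be producing these bigons explicitly. The algebraic intersection calculation only gives a lower bound on $|\alpha_1\cap\beta|$; showing that the geometric intersection actually descends to the algebraic value requires a careful local picture at each of $x$ and $y$ in the spirit of Figures~\ref{fig:DehnTwist0} and~\ref{fig:DehnTwist2}. A subsidiary technical point is tracking signs so that the Dehn twist performs the subtraction $\langle\alpha,\beta\rangle\mapsto\langle\alpha,\beta\rangle-(n-m-1)$ rather than an addition; this is where the specific "opposite-sides" behavior of the pushoffs (and hence the same-sign hypothesis) enters the sign computation.
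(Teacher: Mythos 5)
Your proposal takes essentially the same route as the paper: there, $K$ is likewise a small perturbation of $a$ together with an arc $b$ of $\beta$ joining $x$ to $y$, the same-sign hypothesis is what yields $|K\cap\beta|=1$ (hence $K$ non-separating) together with $|K\cap\alpha|<|\alpha\cap\beta|$, and an appropriate Dehn twist along $K$ then feeds into Lemma~\ref{lem:StandardTriad}. The differences are cosmetic (the paper's $K$ is a pushoff disjoint from $a\cup b$, so its counts come out as $m+1$ rather than your $n-m-1$), and the final verification that $\alpha_1$ can be isotoped to satisfy (\ref{eq:alpha1IntersectLess}) --- the step you flag as ``the hard part'' --- is handled in the paper only by reference to the local picture in Figure~\ref{fig:DehnTwist1}, so your write-up is at essentially the same level of rigor.
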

\begin{proof}
Let $b$ be an arc in $\beta$ connecting $x$ to $y$, then $a\cap b=\{x,y\}$.

Let $K\subset \Sigma_i$ be a small perturbation of $a\cup b$, such that $K\cap(a\cup b)=\emptyset$. 

As in the left part of Figure~\ref{fig:DehnTwist1}, we clearly have 
\[|K\cap\alpha|<|\alpha\cap \beta|,\]
and 
\[|K\cap\beta|=1\le|\alpha\cap \beta|.\] 
Since $|K\cap\beta|=1$, $K$ is non-separating.

As in the right part of Figure~\ref{fig:DehnTwist1}, we can choose an appropriate Dehn twist along $K$, such that $\alpha_1$ can be isotoped so that (\ref{eq:alpha1IntersectLess}) holds.

Now our conclusion follows from Lemma~\ref{lem:StandardTriad}.
\end{proof}

\begin{figure}[ht]
\begin{picture}(345,172)
\put(15,0){\scalebox{0.58}{\includegraphics*
{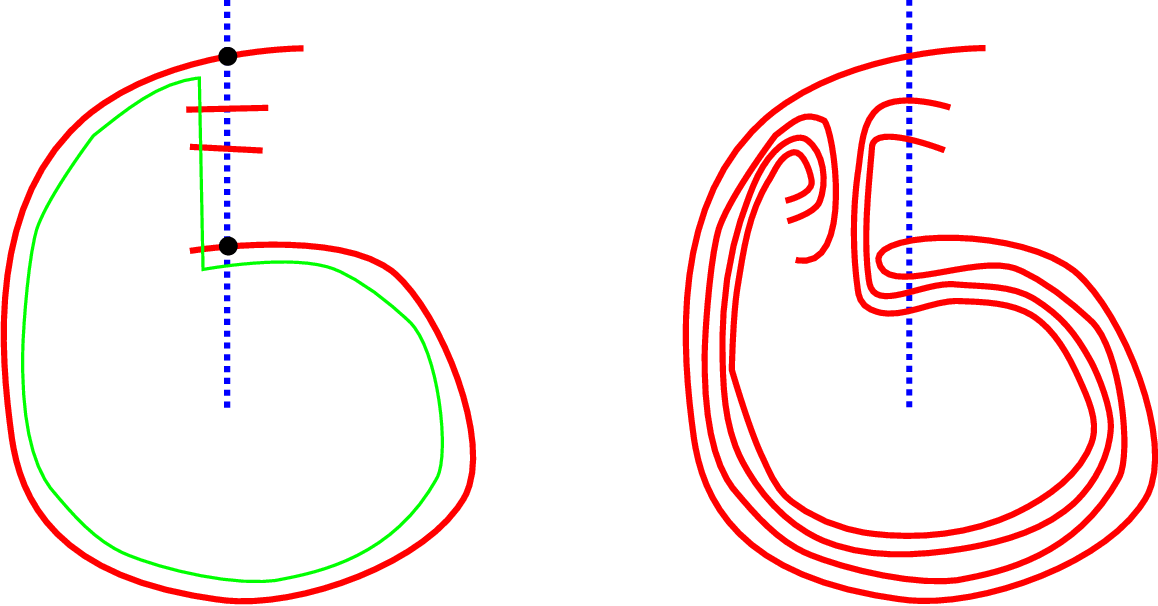}}}

\put(88,131){$\alpha$}

\put(16,118){$a$}

\put(81,110){$b$}

\put(24,50){$K$}

\put(202,118){$\alpha_1$}

\put(271,65){$\beta$}

\end{picture}
\caption{\label{fig:DehnTwist1}The two black dots are intersection points between $\alpha$ and $\beta$ of the same sign. 
The arc $b$ may also intersect $\alpha$ in its interior. In this case, $\alpha_1$ is obtained from $\alpha$ by a left-handed Dehn twist along $K$.}
\end{figure}

By Lemmas~\ref{lem:No1infty}, \ref{lem:NoIntersection}, \ref{lem:TwoPoints}, \ref{lem:SameSign}, in order to prove Proposition~\ref{prop:GenerateM}, the only remaining case is that the signs of the intersection points in $\alpha\cap \beta$ appear as positive and negative alternatively on $\alpha$, and there are more than $2$ intersection points.

Let $G=\Sigma_i\bbslash\beta$, with $\partial G=\beta_0\sqcup\beta_1$. Then $\alpha\cap G$ consists of arcs $a_1,\dots,a_{2m}$ for some $m>1$, in the cyclic order they appear on $\alpha$, with 
\[\partial a_j\subset \beta_k, \quad k=0,1, \quad j\equiv k\pmod2.\]
Moreover, $\alpha$ splits $\beta$ into arcs $b_1,\dots,b_{2m}$, in the cyclic order they appear on $\beta$. For each $j\in\{1,\dots,2m\}$, pick two points $z_j^0,z_j^1$, one by each side of $b_j$, such that $z_j^k$ is near  $\beta_k$ in $G$, $k=0,1$.


\begin{lem}\label{lem:Component}
There exists a component $A$ of $G\bbslash(\cup_{j}a_j)$, and two different indices $j_0,j_1\in \{1,\dots,2m\}$,
such that $z_{j_0}^0,z_{j_1}^1\in A$.
\end{lem}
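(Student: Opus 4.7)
The plan is to perform a local-to-global analysis at the intersection vertices of $\alpha\cap\beta$. At each vertex $p$, the four corners (quadrants) are each bordered by one arc of $\alpha$ and one arc of $\beta$; label each corner by the pair $(b,\varepsilon)$, where $b\in\{b_{j'},b_{j''}\}$ is the adjacent $b$-arc (with $b_{j'},b_{j''}$ the two $b$-arcs incident to $p$, so $j'\ne j''$) and $\varepsilon\in\{0,1\}$ specifies the $\beta_\varepsilon$-side. A direct check at positive and negative crossings shows that the two pairs of diagonally opposite corners at $p$ always correspond to label pairs of the form $(b_{j_0},0)$ and $(b_{j_1},1)$ with $j_0\ne j_1$ (one of each pair is $(b_{j'},\cdot)$, the other is $(b_{j''},\cdot)$, and the two $\beta$-sides differ). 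Therefore, if at any vertex the two opposite corners happen to lie in a common component $A$ of $G\bbslash(\cup_j a_j)$, then $A$ contains $z_{j_0}^0$ and $z_{j_1}^1$ with $j_0\ne j_1$, and the lemma is proved.

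I would then argue by contradiction, assuming that opposite corners at every vertex lie in distinct components. I would form the multigraph $H$ whose vertices are the components of $G\bbslash(\cup_j a_j)$ and whose edges are the $b$-arcs $\{b_j\}$, each edge $b_j$ joining the component of $z_j^0$ to the component of $z_j^1$ (a loop if the two coincide). The connected components of the underlying simple graph of $H$ correspond to the components of $\Sigma_i\bbslash\alpha$, so by the non-separating hypothesis on $\alpha$ the graph $H$ is connected. If $H$ has a single vertex, then the whole complement is one component, every $z_j^k$ lies in it, and the lemma is immediate. Otherwise, a ``diagonal'' component $A$ (one containing both $z_j^0$ and $z_j^1$ for the same $j$) must, under the contradiction hypothesis, have no other $z_{j'}^k$ in it, so in $H$ its vertex is adjacent only to the single loop at $b_j$ and is therefore isolated in the underlying simple graph, contradicting connectedness. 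Hence no diagonal components exist; every component of $G\bbslash(\cup_j a_j)$ is either $\beta_0$-only or $\beta_1$-only, and $H$ is bipartite.

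The remaining step is to extract a contradiction from this bipartite picture together with the standing hypothesis on opposite corners. A naive Euler-characteristic count gives only $c_0+c_1\le 2m+1$ and $\chi(\Sigma_i)\le 1$, which is insufficient for high-genus $\Sigma_i$, so more structure is needed. I would complete the proof by using the chord diagrams formed by the arcs $a_j$ on $\beta_0$ and $\beta_1$: the alternating-sign condition on $\alpha$ makes these two chord diagrams pair the $2m$ intersection-copies by the ``shift-by-one'' rule (on $\beta_0$ the pairs are $(p_{2i},p_{2i+1})$; on $\beta_1$ they are $(p_{2i-1},p_{2i})$), and together they assemble the Hamilton cycle $\alpha$ on $\{p_1,\dots,p_{2m}\}$. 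Walking cyclically around $\beta_0$ and $\beta_1$ while tracking which bipartite class each $b$-segment sees, and using $m>1$ together with the non-separating hypothesis on $\beta$, I expect to force some $\beta_0$-only component to coincide with some $\beta_1$-only component, contradicting the bipartite conclusion of the previous paragraph. Making this final chord-diagram argument airtight is the main obstacle of the proof.
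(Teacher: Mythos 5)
Your reductions are sound as far as they go: the local analysis of opposite corners, the multigraph $H$ whose connected components correspond to those of $\Sigma_i\bbslash\alpha$, the elimination of ``diagonal'' components via connectedness of $H$, and the resulting bipartite picture are all correct. But the proof is not complete, and the gap is not a minor one: the entire content of the lemma is concentrated in the step you defer, namely showing that the bipartite configuration --- every region of $\Sigma_i\bbslash(\alpha\cup\beta)$ adjacent to the $\beta_0$ side only or to the $\beta_1$ side only --- is impossible. The chord-diagram sketch you offer for this is not yet an argument (you yourself call it ``the main obstacle''), and as you correctly observe, Euler-characteristic counting cannot close it. So the proposal stops exactly where the real work begins.

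The missing piece is precisely what the paper supplies, and it is short. Because the signs alternate along $\alpha$, the arcs $a_j$ with $j$ even have both endpoints on $\beta_0$ and those with $j$ odd have both endpoints on $\beta_1$. Cut $G$ first along the even arcs only: since these are disjoint from $\beta_1$, the circle $\beta_1$ survives inside a single component $A_0$, and $A_0$ still meets $\beta_0$ because the even arcs appearing in its boundary have their endpoints on $\beta_0$. Now cut $A_0$ along the odd arcs and choose a component $A$ that still meets $\beta_0$; since every odd arc has both endpoints on $\beta_1$, this $A$ meets $\beta_1$ as well. Hence some region of $G\bbslash(\cup_j a_j)$ contains both a $z^0$-point and a $z^1$-point, which is exactly what your bipartite conclusion forbids. (The paper then excludes the residual possibility $j_0=j_1$ by a local argument at the endpoints of $b_j$; your connectivity argument for $H$ handles that case adequately, so with the two-stage cutting inserted in place of the chord-diagram sketch your proof would be complete.)
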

\begin{proof}
Let
\[
G_0=G\bbslash(\bigcup_{j \text{ even}}a_j).
\]
Since $a_j\cap \beta_1=\emptyset$ when $j$ is even, there exists a distinguished component $A_0$ of $G_0$, such that $\beta_1\subset A_0$. Of course, $A_0\cap\beta_0\ne\emptyset$ since every $a_j$ intersects $\beta_0$ when $j$ is even. Let $A$ be a componet of 
\[
A_0\bbslash(\bigcup_{j \text{ odd}}a_j)
\]
which intersects $\beta_0$. Then $A$ also intersects $\beta_1$ since every $a_j$ intersects $\beta_1$ when $j$ is odd.
So $A\cap\beta_0\ne\emptyset$ and $A\cap\beta_1\ne\emptyset$. It follows that 
\[
A\cap\{z^k_1,\dots,z^k_{2m}\}\ne\emptyset, \text{ for each }k\in\{0,1\}.
\]

Assume that the conclusion of this lemma does not hold, then there exists $j\in\{1,\dots,2m\}$, such that 
\begin{equation}\label{eq:SinglePoint}
A\cap\{z^k_1,\dots,z^k_{2m}\}=\{z^k_j\}, \text{ for each }k\in\{0,1\}.
\end{equation}
Let $x\in\partial b_j$.  
Suppose that $x$ is the common endpoint of $a_{\ell}$ and $a_{\ell+1}$ for some $\ell$. Since the other endpoint of $a_{\ell}$ is also a corner of $A$, it follows from (\ref{eq:SinglePoint}) that $\partial a_{\ell}=\partial b_j$. The same argument shows that $\partial a_{\ell+1}=\partial b_j$.
This implies that $|\alpha\cap\beta|=2$, a contradiction to our assumption that $m>1$.
\end{proof}

Now we are ready to deal with the last case of Proposition~\ref{prop:GenerateM}.

\begin{figure}[ht]
\begin{picture}(345,140)
\put(20,0){\scalebox{0.58}{\includegraphics*
{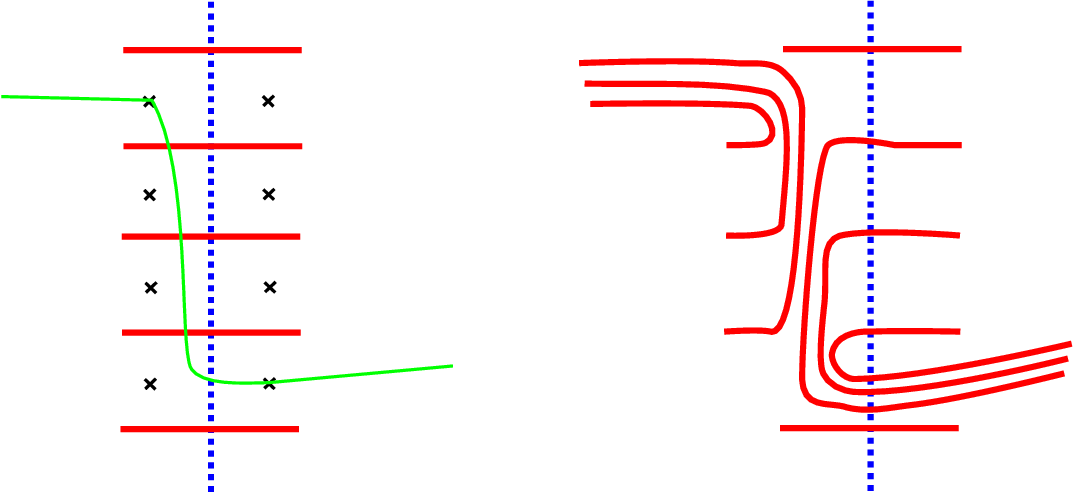}}}

\put(82,135){$\beta$}

\put(16,115){$K$}

\put(61,114){$\scriptstyle z^0_{j_0}$}

\put(97,110){$\scriptstyle z^1_{j_0}$}

\put(80,105){$\scriptstyle b_{j_0}$}

\put(50,30){$\scriptstyle z^0_{j_1}$}

\put(95,25){$\scriptstyle z^1_{j_1}$}

\put(80,23){$\scriptstyle b_{j_1}$}

\put(202,125){$\alpha_1$}

\put(266,135){$\beta$}

\end{picture}
\caption{\label{fig:AltSign}The local picture in a neighborhood of an arc in $\beta$.}
\end{figure}

\begin{lem}\label{lem:AltSign}
Suppose that the signs of the intersection points in $\alpha\cap\beta$ appear as positive and negative alternatively on $\alpha$, then the conclusion of Proposition~\ref{prop:GenerateM} holds true for $(M,\gamma)$.
\end{lem}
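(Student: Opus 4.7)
The plan is to construct, from the data furnished by Lemma~\ref{lem:Component}, a non-separating simple closed curve $K\subset\Sigma_i$ and then to verify that a Dehn twist of appropriate handedness along $K$ satisfies the hypotheses of Lemma~\ref{lem:StandardTriad}.

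Let $A$, $j_0\ne j_1$ be as in Lemma~\ref{lem:Component}. Pick an embedded arc $\eta\subset A$ from $z^0_{j_0}$ to $z^1_{j_1}$; since $A$ is disjoint from every $a_j$, the arc $\eta$ is disjoint from $\alpha$. Close $\eta$ to an embedded loop $K\subset\Sigma_i$ by appending a short arc $\mu$ in a thin regular neighborhood of $\beta$, as depicted in Figure~\ref{fig:AltSign}. By choosing $\mu$ carefully one can arrange $|K\cap\beta|=1$ (a single transverse crossing), while $|K\cap\alpha|$ is bounded by the number of vertical $\alpha$-strands of $N(\beta)$ that $\mu$ traverses; routing $\mu$ along the shorter of the two arcs of $\beta$ between the crossing points gives $|K\cap\alpha|\le m$. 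Since $m>1$ we have $|\alpha\cap\beta|=2m\ge 4$, so both counts are strictly less than $|\alpha\cap\beta|$, which verifies $(\ref{eq:KIntersectLess})$. Moreover, $K$ has algebraic intersection $\pm 1$ with the non-separating curve $\beta$, so $[K]\ne 0$ in $H_1(\Sigma_i)$ and $K$ is non-separating in $\Sigma_i$.

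For the second hypothesis of Lemma~\ref{lem:StandardTriad}, I would show that the curve $\alpha_1=T_K^{\pm}(\alpha)$ obtained from a Dehn twist along $K$ of suitable handedness satisfies $(\ref{eq:alpha1IntersectLess})$. The key local observation, illustrated by the right half of Figure~\ref{fig:AltSign}, is that the two endpoints of $\eta$ sit adjacent respectively to an intersection point of $\alpha\cap\beta$ at an endpoint of $b_{j_0}$ (on the $0$-side) and to one at an endpoint of $b_{j_1}$ (on the $1$-side). The twist band-sums $\alpha$ with parallel copies of $K$ in such a way that these two intersections pair with the single crossing of $K\cap\beta$ into bigons and cancel after isotopy, while the up-to-$m$ new intersections introduced by the $|K\cap\alpha|$ inserted parallel copies of $K$ are absorbed by further bigon cancellations forced by the alternating sign hypothesis on neighboring arcs of $\alpha$. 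A careful bookkeeping then yields $|\alpha_1\cap\beta|<|\alpha\cap\beta|$ after isotopy.

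With both hypotheses of Lemma~\ref{lem:StandardTriad} in place, Proposition~\ref{prop:GenerateM} follows for $(M,\gamma)$, completing the induction. The main technical obstacle is the final intersection count for $\alpha_1$: one has to confirm that the bigon cancellations supplied by the alternating sign configuration, together with the asymmetric side labels $z^0_{j_0}$ versus $z^1_{j_1}$ furnished by Lemma~\ref{lem:Component}, outweigh the new intersections created by the parallel copies of $K$ inserted by the twist. This is precisely where the alternating sign hypothesis of Lemma~\ref{lem:AltSign} enters decisively.
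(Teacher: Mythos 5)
Your construction of $K$ coincides with the paper's: the arc $\eta$ inside the component $A$ supplied by Lemma~\ref{lem:Component} is the paper's $e_1$, your closing arc $\mu$ in a neighborhood of $\beta$ is the paper's $e_2$, and your verification of (\ref{eq:KIntersectLess}) together with the observation that $|K\cap\beta|=1$ forces $K$ to be non-separating is exactly what the paper does. One small point to record explicitly: you also need $|K\cap\alpha|>0$, which holds because $j_0\ne j_1$ forces $\mu$ to cross at least one strand of $\alpha$; this positivity is what makes the eventual decrease strict.

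The genuine gap is in the verification of (\ref{eq:alpha1IntersectLess}), which you yourself flag as ``the main technical obstacle'' and then assert rather than establish; moreover, the cancellation scheme you sketch is not the one that actually works. Write $d=|K\cap\alpha|$ with $0<d<2m$. The twist inserts a parallel copy of $K$ into each of the $d$ strands of $\alpha$ met by $e_2$, so before isotopy $\alpha_1$ meets $\beta$ in $2m+d$ points. For the correct handedness, each such strand is rerouted along $e_1$, which is disjoint from $\beta$; the strand's original crossing with $\beta$ and the single new crossing contributed by its inserted copy of $K$ then cobound a bigon inside the neighborhood of $\beta$ (its sides being an arc of the rerouted strand running parallel to $e_2$, hence parallel to $\beta$, and an arc of $\beta$), and both cancel. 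This removes $2d$ points and yields exactly $|\alpha_1\cap\beta|=2m-d<2m$, which is the count in the paper. By contrast, you pair the crossing $K\cap\beta$ with the two intersection points at the ends of $b_{j_0}$ and $b_{j_1}$ and then invoke unspecified ``further bigon cancellations forced by the alternating sign hypothesis'' to absorb the $d$ new crossings; that bookkeeping does not close up, and the alternating-sign condition has already been fully spent in producing the structure of the arcs $a_j$ and Lemma~\ref{lem:Component} --- it plays no further role in the bigon count. Until the correct bigons are identified and the count $2m-d$ is carried out, the hypothesis (\ref{eq:alpha1IntersectLess}) of Lemma~\ref{lem:StandardTriad} is unverified and the lemma cannot be invoked.
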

\begin{proof}
Let $z_{j_0}^0,z_{j_1}^1$ be as in Lemma~\ref{lem:Component}. Since they are in the same component of $G\bbslash(\cup_{j}a_j)$, we can connect them by an arc $e_1\subset\Sigma_i$ with $e_1\cap(\alpha\cup\beta)=\emptyset$. Since $j_0\ne j_1$, we can connect $z_{j_0}^0,z_{j_1}^1$ by an arc $e_2\subset\Sigma_i$ satisfying
\begin{equation}\label{eq:e2}
|e_2\cap\beta|=1,\quad0<d=|e_2\cap\alpha|<2m.
\end{equation}
Let $K=e_1\cup e_2$, then (\ref{eq:e2}) holds true if we replace $e_2$ with $K$. In particular, this implies that $K$ is non-separating.
See the left part of Figure~\ref{fig:AltSign}.

As in the right part of Figure~\ref{fig:AltSign}, we can apply a Dehn twist along $K$ to $\alpha$, to get a curve $\alpha_1$. After an isotopy, we get 
\[
|\alpha_1\cap\beta|=2m-d<2m.
\]

Now our conclusion follows from Lemma~\ref{lem:StandardTriad}.
\end{proof}

This finishes the proof of Proposition~\ref{prop:GenerateM}. Combining Proposition~\ref{prop:GenerateM} with Example~\ref{exam:MinHC}, we get
Theorem~\ref{thm:SuturedGeneration} by a straightforward induction.

%

\end{document}